\newtheorem*{theorem}{Theorem}
\newtheorem{lem}{Lemma}
\newtheorem*{claim}{Claim}
\DeclareRobustCommand{\qedif}{\hbox{}\nobreak\quad\eqno\hbox{\qedsymbol}}
\title{About the Jones--Wolff Theorem on the Hausdorff~dimension of harmonic measure}
\author{Juli\`a Cuf\'{\i},  Xavier Tolsa, and Joan Verdera}
\date{}
\begin{document}

\maketitle

\section{Introduction}\label{sec1}

These notes are an account on a reading seminar on the Jones--Wolff Theorem on the dimension of harmonic measure, held at the Universitat Aut\`{o}noma de Barcelona during the fall of 2017.  We found that some passages of the original paper~\cite{JW} were somehow hermetic, so that reconstructing the proofs in detail became a delicate task. There is also a presentation of the Jones--Wolff Theorem in chapter 9 of the book ``Harmonic Measure" by Garnett and Marshall~\cite{GM}.  We have expanded the original exposition and also that of the book to cover most of the details. We hope that this will be useful to other readers of this deep and beautiful result.
\vspace{0.3cm}

Let~$K$ be a compact subset of the plane and let $\operatorname{Cap}(K)$ be its logarithmic capacity. Then $\operatorname{Cap}(K)=e^{-\gamma_{K}}$, where $\gamma_{K}$ is the minimal energy of a unit mass distribution on~$K$. The energy of a compactly supported positive Borel measure $\mu$ is
\begin{equation*}\label{energy}
\int \int  \log \frac{1 }{|z-w|} \,d\mu(z) \,d\mu(w)
\end{equation*}
and 
\begin{equation*}\label{wienercap}
\gamma_{K} = \inf \int_K \int_K  \log \frac{1 }{|z-w|} \,d\mu(z) \,d\mu(w),
\end{equation*}
where the infimum is taken over all positive  Borel measures supported on $K$ having total mass $1$, that is,
over all probability measures supported on $K$.  It is a classical result of potential theory that the previous infimum is indeed a minimum and that there is a unique measure attaining the minimum, called the equilibrium measure.  The number  $\gamma_{K} $ is also called the Robin constant of $K$. For example, the capacity of a closed disc is the radius.
\vspace{0.3cm}

Assume  that $\operatorname{Cap}(K)>0$ and set $\Omega=\mathbb{C}^{*}\backslash K$, $\mathbb{C}^{*}$ being the Riemann sphere. Let $\omega(\cdot)=\omega (\Omega, \cdot, \infty)$ be the harmonic measure of~$\Omega$ with pole at infinity. If $\Omega$ is regular with respect to the Dirichlet problem (for example, if it has  smooth boundary)  harmonic measure is defined as follows. Take a continuous function $f$ on the boundary~$\partial \Omega$ of $\Omega$ and let $u$ be the solution of the Dirichlet problem on $\Omega$
with boundary values~$f$. The mapping $ f \rightarrow u(\infty)$ is linear and bounded on the space of continuous functions on~$\partial \Omega$  and hence, by Riesz representation theorem,  there is  a measure $\omega$ on~$\partial \Omega$  satisfying
$$
u(\infty) = \int_{\partial \Omega} f(z) \,d\omega(z), \quad f \in C(\partial \Omega).
$$
This positive measure is the harmonic measure of $\Omega.$  The same can be done replacing the point at infinity with any point $z$ in $\Omega$ and one obtains a harmonic measure $d \omega_{z}$ satisfying
$$
u(z) = \int_{\partial \Omega} f(w) \,d\omega_z(w), \quad f \in C(\partial \Omega).
$$

If the domain $\Omega$ is not smoothly bounded one can use a limiting argument due to Wiener to define harmonic measure \cite{GM}.

The harmonic measure of the complement of a disc is the normalized arclength measure on the boundary. If the domain is simply connected then one uses conformal mapping in computing harmonic measure. 

The notation we adopt for harmonic measure of the domain $\Omega$ with respect to infinity is $\omega(E) = \omega(\Omega,E, \infty), \quad E \subset \partial \Omega.$

It is known that $\omega$ is precisely the equilibrium measure of~$K$. The theorem of Jones and Wolff asserts that $\omega$ lives on a set of Hausdorff dimension less than or equal to one.

\begin{theorem}[Jones--Wolff]
Let $K\subset\mathbb{C}$ be a compact set of positive capacity and let $\Omega=\mathbb{C}^{*}\backslash K$. Then there is a set~$F\subset \partial\Omega$ of Hausdorff dimension less than or equal to~$1$ such that $\omega (\Omega,F,\infty)=1$.
\end{theorem}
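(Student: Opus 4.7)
The plan is to establish that for $\omega$-a.e.\ $z \in \partial\Omega$,
\[
\liminf_{r\to 0} \frac{\log \omega(B(z,r))}{\log r} \leq 1,
\]
or equivalently, that for every $\epsilon > 0$ and $\omega$-a.e.\ $z$ there exist arbitrarily small radii $r$ with $\omega(B(z,r)) \geq r^{1+\epsilon}$. Let $F$ be the Borel set on which this $\liminf$ is $\leq 1$. For fixed $\epsilon$, a $5r$-Vitali covering extracted at good scales yields, uniformly as the maximal radius tends to $0$, a cover of $F$ by balls $B(z_i,5r_i)$ with $\sum(5r_i)^{1+\epsilon}\lesssim \omega(\partial\Omega)<\infty$; this forces $\dim_H F \leq 1+\epsilon$ for every $\epsilon>0$, and hence $\dim_H F \leq 1$, while $\omega(F) = 1$ by construction.

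\textbf{The Green's function and Wolff's lemma.} The central object is the Green's function $G(z) := g(\Omega,z,\infty) = \gamma_K + \int \log|z-w|\,d\omega(w)$, which is harmonic in $\Omega$, vanishes quasi-everywhere on $\partial\Omega$, and satisfies $G(z) = \log|z| + O(1)$ as $z \to \infty$. Formally $d\omega = (2\pi)^{-1}|\nabla G|\,d\mathcal{H}^1|_{\partial\Omega}$, so the density of $\omega$ at scale $r$ is governed by the size of $|\nabla G|$ near the boundary. The key pointwise input --- \emph{Wolff's lemma} --- asserts that, provided $G$ has no critical points in an annulus around $z \in \partial\Omega$, there exists $z_r \in \Omega$ with $|z_r-z|\asymp r$ such that $\omega(B(z,Cr)) \gtrsim r\,|\nabla G(z_r)|$. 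Consequently the set where $\omega$ has too small a density at scale $r$ is, outside a critical-point exceptional set, contained in the set where $|\nabla G|$ is small at that scale.

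\textbf{Stopping-time scheme and critical-point bookkeeping.} One runs a stopping-time argument on a Whitney-type dyadic family of boxes inside $\Omega$: declare a box $Q$ stopping when $\omega(Q)/\ell(Q)^{1+\epsilon}$ drops below a threshold, which by Wolff's lemma forces $|\nabla G|$ to be small somewhere in $Q$ unless a critical point of $G$ lies nearby. Away from the critical set of $G$, the function $\log|\nabla G|$ is subharmonic in $\Omega$, with $-\infty$ singularities of order equal to the multiplicity at each critical point. Applying Jensen's formula to $\log|\nabla G|$ on suitable annuli bounds the weighted total count of critical points in any region by its values on the outer boundary, themselves controlled by the asymptotics $G(z)\sim\log|z|$ near infinity. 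Iterating this scheme at scales $2^{-n}$ shows that the $\omega$-mass of boxes that remain stopping indefinitely decays to zero, yielding the required density lower bound for $\omega$-a.e.\ point.

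\textbf{The main obstacle.} What distinguishes this proof sharply from the simply connected Makarov case is the treatment of critical points of $G$: in general they can cluster against $\partial\Omega$ and pile up across dyadic scales, and if the Jensen loss incurred at one scale cannot be charged to a summable quantity across scales the whole scheme collapses. Wolff's technical innovation --- which any honest write-up must reconstruct in full combinatorial detail --- is to synchronize the critical-point count with the dyadic stopping so that the total $\omega$-mass deposited on the ``bad'' (near-critical) pieces of $\partial\Omega$ is summable in the scale parameter. Matching this synchronization quantitatively with the density threshold $r^{1+\epsilon}$, uniformly across the stopping-time iteration, is the technical heart of the Jones--Wolff theorem.
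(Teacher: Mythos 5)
There is a genuine gap: what you have written is an outline that names the central difficulty and then defers it, rather than a proof. Two specific points. First, the pointwise input you call ``Wolff's lemma'' ($\omega(B(z,Cr))\gtrsim r\,|\nabla G(z_r)|$ away from critical points) is stated vaguely and never proved, and the step that would make the whole scheme work --- showing that the $\omega$-mass of the points where the density bound fails at all small scales tends to zero --- is exactly what your last paragraph concedes is missing (``must reconstruct in full combinatorial detail''). Saying $\omega(F)=1$ ``by construction'' begs the question: proving that the lower density estimate holds $\omega$-a.e.\ \emph{is} the theorem. Second, the plan to run Jensen's formula for $\log|\nabla G|$ on annuli hugging $\partial\Omega$ of a completely general compact $K$ runs into the obstruction that $G$ need not extend smoothly to $\partial\Omega$, the boundary can be wildly irregular (e.g.\ of positive area), and the critical points of $G$ can be infinite in number and accumulate on $\partial\Omega$; without some regularization of the domain there is no finite bookkeeping to synchronize with a stopping time, and you give none.

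The paper resolves precisely these issues by a different mechanism, which has no counterpart in your sketch: after splitting $K$ into well-separated grid pieces, it \emph{modifies the domain}, replacing each $K\cap Q$ by a disc of radius $\tfrac12\operatorname{Cap}(K\cap Q)^{1+\varepsilon}/\ell(Q)^{\varepsilon}$ (disc construction) and excising annuli around squares with $\omega(Q)\ge M\ell(Q)$ (annulus construction), so that the final compact $K^*$ is a \emph{finite} union of discs. The delicate point is that harmonic measure does not decrease too much under the disc construction (Lemma~1), which requires a careful potential-theoretic comparison. Then, for the Green function $g$ of the modified domain, each disc is surrounded by a level curve on which $|\nabla g|\le CM^2\log(1/\ell(Q))$, and the global inequality $\tfrac1{2\pi}\int_\Gamma \frac{\partial g}{\partial n}\log|\nabla g|\,ds>-\log 2$ (Lemma~3, proved via the argument principle, the critical points of $g$, and the Robin constant) controls the total harmonic measure of the discs where $|\nabla g|$ is exponentially small; this yields $\omega(K\setminus A)\lesssim \log\log(1/\rho)/\log(1/\rho)$, while the surviving discs have small Hausdorff content by the capacity--content comparison. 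Your outline shares the underlying heuristic (the identity relating $\int\frac{\partial g}{\partial n}\log|\nabla g|$ to critical points and $\gamma_K$, which the paper states informally in the introduction), but the domain modification, the harmonic-measure comparison lemma, the level-curve gradient bound, and the quantitative endgame are all absent, and without them the argument does not close.
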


Let us recall the definition of Hausdorff content associated with a measure function~$h(r)$ (a non-negative increasing continuous function on $[0, \infty)$ satisfying $h(0)=0$). Let $A$ be a subset of the plane. Then the Hausdorff content of $A$ is
\begin{equation*}\label{content}
M_h(A) = \inf \, \sum_n  h(r_n),
\end{equation*}
where the infimum is taken over all coverings of $A$ by a sequence (possibly finite) of balls of radius $r_n$.
\vspace{0.3cm}

The proof of the Theorem is reduced to showing a statement that looks a little simpler.

Fix $\varepsilon>0$ and take $h(r)=r^{1+\varepsilon}$ as a measure function. Then it is enough to show
\begin{equation}\label{conteneta}
\emph{For each $\eta>0$ there is a set~$A\subset K$ with $M_{h}(A)<\eta$ and $\omega (K\backslash A)<\eta$.}
\end{equation}

 In fact this implies that for $\eta>0$ there is a set $A\subset K$ with $M_{h}(A)<\eta$ and $\omega (K\backslash A)=0$, which in turn implies that there is $A\subset K$ with $M_{h}(A)=0$ and~$\omega(K\backslash A)=0$.  Now taking $\varepsilon_{n}\to 0$, one gets sets $A_{n}\subset K$ with $M_{h_{n}}(A_{n})=0$ ($h_{n}(r)=r^{1+\varepsilon_{n}}$) and $\omega (K\backslash A_{n})=0.$  Letting  $F=\bigcap\limits_{n}A_{n}$ we have $M_{h_{n}}(F)=0$, for each~$n$, which gives that the Hausdorff dimension of $F$ is less than or equal to one, and  $\omega (K\backslash F)=0$.
 \vspace{0.3cm}

 We proceed now to describe a very rough sketch of the proof.  One makes a reduction to the case in which $K$ is a finite union
of pieces of small diameter and rather well separated.  Then one constructs an auxiliary compact~$K^*$, which is a finite union of closed discs,
using two special modification methods, which one calls  ``the disc construction" and the ``annulus construction".  It is crucial to compare
the harmonic measure associated with $\Omega$ and that associated with the new domain~$\Omega^* = \mathbb{C}^* \setminus{K^*}$.  This is simple for the annulus construction, but much more delicate for the disc construction; Lemma~\ref{lem1} below takes care of this issue. The gradient of the Green function~$g$  of $\Omega^*$ with pole at~$\infty$ can be estimated on some special curves surrounding $K^*$ and contained in level sets of~$g$.  All these ingredients allow to estimate the harmonic measure of~$\Omega$ in terms of the integral of the gradient of $g$ on these curves. Lemma~\ref{lem3} is the main tool to end the proof estimating this integral in the appropriate way. An ingredient in the proof of Lemma~\ref{lem3} yields in the limiting case, assuming $\partial \Omega$ smooth,  the formula
$$
\frac{1}{2\pi}\int_{\partial \Omega}\frac{\partial g}{\partial
n}\, \log |\nabla g|\,ds=\sum_k g
(\xi_{k})+\gamma_{K},
$$
where $g$ is now the Green function of $\Omega$ with pole at $\infty,$ $n$ is the unit exterior normal to $\partial \Omega$ and the $\xi_k$ are the critical points of $g$. An heuristic, informal argument to understand that Jones--Wolff's theorem is plausible proceeds as follows.  Since $g$ vanishes on $\partial \Omega$  
the tangential derivative of $g$ vanishes too and $\nabla g = \frac{\partial g}{\partial
n}.$ The harmonic measure is (in the smooth case) 
$$d\omega(z) =   \frac{1}{2\pi} \, \frac{\partial g}{\partial
n}(z)\,ds.$$ Assume that at the point $z$ the ``dimension" of $\omega$ is $d(z)$, which means that $\omega(B(z,r)) \sim r^{d(z)}$, $B(z,r)$ being the disc of center~$z$ and radius~$r$. Since
$$
\frac{\partial g}{\partial n}(z) = \lim_{r \rightarrow 0} \frac{\omega (B(z,r))}{2 r},
$$
we have
$$
\lim_{r \rightarrow 0}  \frac{1}{2\pi} \int_{\partial \Omega} (d(z)-1) \log(2r) \,d\omega(z) = \sum_k g
(\xi_{k})+\gamma_{K}.
$$
The right hand side is positive and thus the integrand in the left hand side of the preceding identity should be non-negative, that is
$d(z) \le 1$, $z \in \partial \Omega$, and so, $\omega$ lives in a set of dimension not greater than $1$.

The theorem of Jones--Wolff has been improved by T.~H.~Wolff in~\cite{Wolff},  where he proves that the harmonic measure is concentrated on a set of $\sigma$-finite length. Also J.~Bourgain~\cite{Bourgain} proved that the dimension of harmonic measure in $\mathbb{R}^n$ is less than $n-\varepsilon$, where $\varepsilon $ is a small positive number depending on $n$. One cannot take $\varepsilon =1$ in dimensions greater than $2$, as shown by T.~H.~Wolff  in~\cite{Wolff2}.

\section{The disc and the annulus construction}\label{sec2}

Let us start with the disc construction.

\subsection*{Disc construction}

Fix $\varepsilon>0$. Let $Q$ be a square with sides parallel to the axes and side length~$\ell=\ell(Q)$ and set $E=Q\cap K$. Replace $E$ by a closed disc~$B$ with the same center as~$Q$ and radius~$r(B)$ defined by
\begin{equation}\label{radius}
r(B)=\frac{1}{2} \frac{\operatorname{Cap}(E)^{1+\varepsilon}}{\ell^{\varepsilon}} =
\frac{1}{2} \frac{e^{-\gamma_{E}(1+\varepsilon)}}{\ell^{\varepsilon}}.
\end{equation}
So we get a new compact set~$\tilde{K}=(K\backslash E)\cup B$, a new domain~$\tilde{\Omega}=\mathbb{C}^{*}\backslash
\tilde{K}=(\Omega \cup E)\backslash B$ and a new harmonic measure~$\tilde{\omega}(\cdot)=\tilde{\omega}(\tilde{\Omega},\cdot,\infty)$.

Note that $B\subset Q$. In fact, since the capacity of a disc is the radius
\begin{equation*}
\operatorname{Cap}(E) \le \frac{\sqrt{2}}{2}\ell,
\end{equation*}
so that
\begin{equation*}
r(B) \le \frac{1}{2} \frac{\left(\sqrt{2}/2\right)^{1+\varepsilon}\cdot \ell^{1+\varepsilon}}{\ell^{\varepsilon}}
=\frac{\ell}{2} \left(\sqrt{2}/{2}\right)^{1+\varepsilon} \le \ell/2.
\end{equation*}

\subsection*{Annulus construction}

Let $Q$ be a square with sides parallel to the axis and take the square~$RQ$, where $R$ is a number larger than $1$ that will be chosen later. One has to think that $R$ is very large. Delete $K\cap (RQ \backslash Q)^{0}$ from $K$ to obtain a new domain~$\tilde{\Omega}=\Omega\cup (RQ\backslash Q)^{0}$ and a new harmonic measure~$\tilde{\omega}(\cdot)=\omega (\tilde{\Omega},\cdot,\infty)$.

\vspace*{7pt}

It is important to have some control on the harmonic measure of the new domain obtained after performing the disc or the annulus construction. For the annulus this is easy:  any part of~$K$ which has not been removed has larger or equal harmonic measure. In other words,
if~$A \subset \partial 	\Omega$ satisfies $A\cap (RQ\backslash Q)=\emptyset$, then $\tilde{\omega}(A)\ge \omega(A)$. This is a consequence of the fact that $A\subset\partial\Omega\cap \partial\tilde{\Omega}$ and $\Omega\subset \tilde{\Omega}$ (the domain increases and the set lies in the common boundary).

\vspace*{7pt}

Estimating the harmonic measure after the disc construction is a difficult task. The result is the following.

\begin{lem}\label{lem1}
Let $Q$ be a square with sides parallel to the axis. Fix $\varepsilon>0$ and perform the disc construction for this~$\varepsilon$. Assume that $RQ\backslash Q\subset \Omega$. Then there exists  a number~$R_0(\varepsilon)$ such that for $R\ge R_0(\varepsilon)$  one has
\begin{enumerate}
\item[a)] $\tilde{\omega}(B)\ge C(\varepsilon) \, \omega(Q\cap K)$, where $C(\varepsilon)$ is a positive constant depending only on~$\varepsilon$.
\item[b)] $\tilde{\omega}(A)\ge \omega(A)$, if $A\subset \partial\Omega\backslash RQ$.
\end{enumerate}
\end{lem}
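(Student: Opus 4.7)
The plan is to reduce both parts to pointwise comparisons of local harmonic measures on $\partial Q$, via a decomposition obtained by conditioning Brownian motion on its first passage through $\partial Q$, and then to carry out the comparisons using Harnack's inequality and explicit capacity estimates tailored to the specific choice of $r(B)$.

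First I would set up the decomposition. Since the modification takes place only inside $Q$, the unbounded domains $\Omega\setminus\overline{Q}$ and $\tilde\Omega\setminus\overline{Q}$ coincide; denote the harmonic measure at $\infty$ of this common domain by $\mu$. Then for any Borel set $F$ disjoint from $\partial Q$,
\begin{equation*}
\omega(F) = \mu(F) + \int_{\partial Q}\omega_{\zeta}(\Omega, F)\,d\mu(\zeta), \qquad
\tilde\omega(F) = \mu(F) + \int_{\partial Q}\tilde\omega_{\zeta}(\tilde\Omega, F)\,d\mu(\zeta).
\end{equation*}
Taking $F=A$ for part~(b) and $F=E$, $F=B$ in the respective formulas for part~(a) (and noting $\mu(E)=\mu(B)=0$ since $E,B\subset Q$), this reduces both statements to pointwise inequalities on $\partial Q$: $\tilde\omega_{\zeta}(\tilde\Omega, A)\ge \omega_{\zeta}(\Omega, A)$ and $\tilde\omega_{\zeta}(\tilde\Omega, B)\ge C(\varepsilon)\,\omega_{\zeta}(\Omega, E)$.

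For part~(b), I would express both local harmonic measures via a strong-Markov decomposition at the first time the Brownian motion hits $E$, $B$, or $K_0$. Letting $\hat\Omega:=\mathbb{C}^*\setminus K_0$ and denoting by $\tau_F$ the first hitting time of $F$,
\begin{equation*}
\omega_{\zeta}(\Omega, A) = \omega_{\zeta}(\hat\Omega, A) - E_\zeta\bigl[\omega_{X_{\tau_E}}(\hat\Omega, A)\,;\,\tau_E<\tau_{K_0}\bigr],
\end{equation*}
and an analogous formula with $E$ replaced by $B$ for $\tilde\omega_\zeta(\tilde\Omega,A)$. Harnack's inequality on $Q\subset RQ\subset\hat\Omega$ ensures that $y\mapsto\omega_y(\hat\Omega, A)$ is almost constant on $Q$, with oscillation tending to $1$ as $R\to\infty$. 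Combined with the capacity comparison $P_\zeta(\tau_B<\tau_{K_0})\le P_\zeta(\tau_E<\tau_{K_0})$ (which follows from $\operatorname{Cap}(B)\le \operatorname{Cap}(E)$ together with the standard estimate of harmonic measure of small sets in $RQ$), the inequality of part~(b) follows provided $R\ge R_0(\varepsilon)$.

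For part~(a) I would bound $\omega_\zeta(\Omega, E)$ from above and $\tilde\omega_\zeta(\tilde\Omega, B)$ from below by explicit local capacity estimates. For $\zeta\in\partial Q$, the probability of hitting a small capacitor $F\subset Q$ before exiting $RQ$ satisfies, up to multiplicative Harnack constants,
\begin{equation*}
\omega_\zeta(RQ\setminus F, F) \;\approx\; \frac{\log R}{\log R + \log\bigl(\ell/\operatorname{Cap}(F)\bigr)}.
\end{equation*}
The choice $r(B)=\tfrac{1}{2}\operatorname{Cap}(E)^{1+\varepsilon}/\ell^\varepsilon$ gives $\log(\ell/r(B)) = (1+\varepsilon)\log(\ell/\operatorname{Cap}(E)) + \log 2$, so the ratio of the corresponding quantities for $F=B$ and $F=E$ is bounded below by $1/(1+\varepsilon)$, up to a correction that vanishes once $R$ is taken large enough in terms of $\varepsilon$. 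Propagating this local estimate through the decomposition formula, and controlling the contributions from Brownian excursions that exit $RQ$ and return, yields the constant $C(\varepsilon)$. The main obstacle is precisely this last step in part~(a): the tail contributions themselves involve $\omega(E)$ and $\tilde\omega(B)$, so the inequality must be closed in a self-consistent manner, and the regime $\operatorname{Cap}(E)\ll\ell$ — where $B$ is much smaller than $E$ and the local capture probability drops by a factor close to $1/(1+\varepsilon)$ — is what dictates the precise form of $R_0(\varepsilon)$.
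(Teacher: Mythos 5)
Your reduction to \emph{pointwise} inequalities at $\zeta\in\partial Q$ is the fatal step: both pointwise statements you reduce to are false in general, so no amount of Harnack or capacity bookkeeping can recover them. For part (a), normalize $\ell(Q)=1$ and take $E=K\cap Q$ to be a disc of very small capacity tangent to $\partial Q$, with $K_0=K\setminus E$ a circle far outside $RQ$, and take $\zeta\in\partial Q$ adjacent to $E$. Then $\omega_\zeta(\Omega,E)$ is as close to $1$ as you wish, while $\tilde\omega_\zeta(\tilde\Omega,B)\approx \log R/(\log R+\gamma_B)$ with $\gamma_B=(1+\varepsilon)\gamma_E+\log 2\to\infty$ as $\operatorname{Cap}(E)\to0$; since $R=R(\varepsilon)$ is fixed before $K$ is given, the ratio $\tilde\omega_\zeta(\tilde\Omega,B)/\omega_\zeta(\Omega,E)$ tends to $0$, so no $C(\varepsilon)$ works pointwise. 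For part (b), the claimed monotonicity $P_\zeta(\tau_B<\tau_{K_0})\le P_\zeta(\tau_E<\tau_{K_0})$ does \emph{not} follow from $\operatorname{Cap}(B)\le\operatorname{Cap}(E)$, because $E$ and $B$ sit at different places inside $Q$: from $\zeta\in\partial Q$ one has, roughly, $P_\zeta(\tau_F<\tau_{K_0})\approx(\log R-\log d_F)/(\log R+\gamma_F)$ with $d_F=\operatorname{dist}(\zeta,F)$, so the positional effect is of relative size $\log(d_E/d_B)/\log R$ while the capacity-gap gain is of relative size $(\varepsilon\gamma_E+\log 2)/\log R$ --- the \emph{same} order in $R$. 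Placing a small $E$ in the corner of $Q$ farthest from $\zeta$ gives $\log(d_E/d_B)$ up to about $\log(2\sqrt2)$, which exceeds $\varepsilon\gamma_E+\log 2$ for small $\varepsilon$ and moderate $\gamma_E$; then the pointwise inequality fails at that $\zeta$ no matter how large $R$ is, since enlarging $R$ rescales both effects identically. (Even if that hitting comparison were true, your Harnack step only yields $\tilde\omega_\zeta(A)\ge(1-o(1))\,\omega_\zeta(A)$, and you never exhibit the strict gain needed to remove the error and get the exact inequality claimed in (b).) Finally, you yourself flag that the ``self-consistent closing'' of part (a), accounting for excursions leaving $RQ$, is not carried out, so even as a plan the argument is incomplete.

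The reason the lemma is nevertheless true is that the comparison must be made \emph{far from} $Q$, where the difference in location of $E$ and $B$ inside $Q$ costs only $O(1/R)$ rather than $O(1/\log R)$ or $O(1)$; this is exactly how the paper proceeds. For (a) it compares the potentials $u,v$ of the equilibrium measures of $B$ and $E$ taken with respect to the Green function of $\Omega\cup E$, shows $u(z)=v(z)+O(1/|z|)$ outside $RQ$, and obtains $\omega(\tilde\Omega,B,z)\ge \tfrac14\,\omega(\Omega,E,z)+O(1/|z|)$ before letting $z\to\infty$; the ratio $(\gamma_E+h)/((1+\varepsilon)\gamma_E+\log2+h)$ stays bounded below only because $h(z_0,\xi_0)\gtrsim\log R$ is evaluated across the big annulus, not on $\partial Q$. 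For (b) it compares the Green functions $g_B$ and $g_E$ of $U\setminus B$ and $U\setminus E$ ($U=\{|z|<R\}$) on the circle $|z|=R/2$, where the quantitative Claim gives a gain $\gtrsim(\varepsilon\gamma_E+\log2)/(\log R+\gamma_E)^2$ against errors $O(1/(R\log R))$, and then an extremal-point/maximum-principle argument converts this into the exact inequality $\tilde\omega(A,z)\ge\omega(A,z)$ for $|z|\ge R/2$. Averaging over $\partial Q$ (equivalently, viewing from distance $\sim R$) is what washes out the positional effects that defeat your pointwise reduction; without such an averaging mechanism the proposal cannot be repaired.
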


The proof of Lemma~\ref{lem1} will be presented in Section~\ref{sec6} and we will use it as a black box along the paper.

\section{Domain modification}\label{sec3}

Let $\Omega=\mathbb{C}^{*}\backslash K$, $\operatorname{Cap} K>0$ and assume that $K\subset \{|z|<1/2\}$ (this assumption will be convenient later on, but it is not essential; we can reduce to this situation by a dilation). Fix $\varepsilon>0$ and let $R>2+R_0(\varepsilon)$, $R$ integer, where $R_0(\varepsilon)$ is the constant given by Lemma~\ref{lem1}.  We let $M$ stand for a large constant that will be chosen later and we let $\rho$ be a small constant so that $M\le \log 1/\rho$, and $\rho=\frac{1}{2^{N}}$, $N$~a positive integer. Consider the grid~$\mathcal{G}$ of dyadic squares of side length~$\rho$ and lower left corner at the points of the form $\{(m+ni)\,\rho : \, m,n\in \mathbb{Z} \}$. For each $1\le p,q\le R$, let $\mathcal{G}_{pq}$ be the family of squares~$Q\in \mathcal{G}$ with $(m,n)\equiv (p,q)$ (mod $R\times R$). Then $\mathcal{G}=\bigcup\limits_{p,q=1}^{R} \mathcal{G}_{pq}$.

Write $K_{pq}=\bigcup\limits_{Q\in \mathcal{G}_{pq}}K\cap Q$, $\Omega_{pq}=\mathbb{C}^{*}\backslash K_{pq}$, $\omega_{pq}(A)=\omega(\Omega_{pq},A,\infty), \;A	\subset \partial \Omega_{pq}= \partial K_{pq}.$  If $A\subset \partial\Omega= \partial K$, then $\omega (A)=\sum\limits_{pq}\omega (A\cap K_{pq})\le \sum\limits_{pq}\omega_{pq}(A\cap K_{pq})$ because $\Omega\subset \Omega_{pq}$ and so $A\cap K_{pq}\subset\partial \Omega\cap \partial \Omega_{pq}$.  Note that it is enough to prove the theorem for $K=K_{pq}.$  Indeed if we know the result for all $K_{pq}$, then there are sets $A_{pq}\subset K_{pq}$ with $\dim_{H}(A_{pq})\le 1$ and $\omega_{pq}(K_{pq}\backslash A_{pq})=0$. Taking $F=\bigcup\limits_{pq}A_{pq}$ we get
$\dim_{H}(F)\le 1$ and
\begin{equation*}
\begin{split}
\omega(K\backslash F)&=\omega \left(\bigcup_{pq}K_{pq}\backslash \bigcup_{pq}A_{pq}\right)\\*[9pt]
&=\omega \left(\bigcup_{pq}(K_{pq}\backslash A_{pq})\right)\le \sum_{pq}\omega (K_{pq}\backslash A_{pq})\le
\sum_{pq}\omega_{pq}(K_{pq}\backslash A_{pq})=0.
\end{split}
\end{equation*}
We remark that, by construction, for each square~$Q\in \mathcal{G}_{pq}$ one has $RQ\backslash Q\subset \Omega_{pq},$ so that
we are entitled to apply Lemma \ref{lem1}.

From now on we fix $p$, $q$ and let $\Omega=\Omega_{pq}$, $K=K_{pq}$, $\omega=\omega_{pq}$.  We let $\{Q_j\}_j$ be the family of squares in $\mathcal{G}_{pq}.$

\vspace*{7pt}

Fix $\varepsilon>0$ and perform the disc construction for $\varepsilon$  in every square~$Q_{j}$, so that we get a finite family of closed discs~$\{B_{j}\},$  whose union is a compact set $K_1$,  a new domain~$\Omega_{1}= \mathbb{C}^* \setminus K_1$  and a new harmonic measure~$\omega_{1} (\cdot)= \omega(\Omega_1, \cdot, \infty)$.

\vspace*{7pt}

Next choose a dyadic square~$Q^{1}$ of largest side~$\ell(Q^{1})$, not necessarily from $\mathcal{G}_{pq},$ such that
$$
\ell(Q^{1})\ge \rho\quad\text{and}\quad \omega_{1}(Q^{1})\ge M\ell (Q^{1}).
$$
If such $Q^{1}$ does not exist we stop the domain modification. If $Q^{1}$ exists we perform the annulus construction on~$Q^1$ (with constant~$R$) and after this we perform the disc construction on the square~$Q^{1}$, replacing $K_{1}
\cap Q^{1}$ by a disc~$B^{1}$.
So we obtain a new compact $K_2$, a new domain~$\Omega_{2}= \mathbb{C}^* \setminus K_2$ and a new harmonic measure~$\omega_{2} (\cdot)= \omega(\Omega_2, \cdot, \infty)$.

\vspace*{7pt}

Now we continue and take $Q^{2}$ dyadic with largest side such that $Q^{2}\not\subset Q^{1}$, $\ell (Q^{2})\ge \rho$ and $\omega_{2}(Q^{2})\ge M\ell (Q^{2})$. If such $Q^{2}$ does not exist we stop. Otherwise we perform the annulus construction on~$Q^{2}$ but with a special rule: If $B^{1} \cap(\partial (RQ^{2}\backslash Q^{2}))\ne \emptyset$, then we do not remove the set $B^{1}\cap (RQ^{2}\backslash Q^{2})$ from $K_{2}$. The reason for this rule is to get full balls in all cases.

After that we perform the disc construction on~$Q^{2}$, replacing $K_{2}\cap Q^{2}$ by the corresponding disc~$B^{2}$, getting a new compact $K_3,$ a new domain~$\Omega_{3}$ and a new harmonic measure~$\omega_{3}$.

\vspace*{7pt}

We continue this process so that if $K_{1}\cap Q^{1}$, $K_{2}\cap Q^{2},\dotsc, K_{n-1}\cap Q^{n-1}$ have been replaced by $B^{1},\dotsc, B^{n-1}$ we choose now (if it exists) a dyadic cube~$Q^{n}$ with largest side so that
$$
Q^{n} \not\subset Q^{j} ,\quad j=1,\dotsc,n-1,\quad \ell (Q^{n})\ge \rho,\quad \omega_{n}(Q^{n})\ge M\ell (Q^{n}).
$$
Then (if we do not stop) we perform the annulus construction with respect to~$Q^{n}$ but without removing $B^{j}\cap (RQ^{n}\backslash Q^{n})$, $j=1,\dotsc,n-1$ in case that $B^{j}\cap (\partial (RQ^{n}\backslash Q^{n}))\ne\emptyset$ (this is the special rule). Finally we perform the disc construction on~$Q^{n}$, getting $B^{n}$, $K_{n+1}$, $\Omega_{n+1}$ and $\omega_{n+1}$.

At each step there are only finitely many candidate dyadic squares, because $\rho \le  \ell(Q) \le 1/M.$ Since
no $Q^{j}$ can be repeated (because $Q^{j}\not\subset Q^{\ell}$, $\ell=1,\dotsc,j-1$) the modification process stops after finitely many steps. Let $K^{*},\Omega^{*}=\mathbb{C}\backslash K^{*}$, $\omega^{*}(\cdot)=\omega (\Omega^{*},\cdot,\infty)$ be the final outcome so that $K^{*}$ is the disjoint union of the non removed discs; more precisely,
$$
K^{*}=\bigcup_{k\in S}B^{k}\cup \bigcup_{j\in T}B_{j} \quad \text{(some finite sets of indices~$S$ and $T$)},
$$
where the $B_j$ are the discs produced after the first step that have survived all process and the $B^k$ are the new discs produced after performing the annulus and the disc constructions.

Now we want to prove by means of Lemma~\ref{lem1} the following estimates:
\begin{alignat}{2}
 &\omega^{*} (B_{j})\ge C(\varepsilon)\,\omega (Q_{j}), &\quad &j\in T,\label{equ1}\\
 &\omega^{*} (Q^{j})\ge C(\varepsilon) \, M \ell (Q^{j}), &\quad &j\in S.\label{equ2}
\end{alignat}
For \eqref{equ1} note first that  we always have $R Q_{j}\backslash Q_{j}\subset \Omega.$  Since $Q_j$ has survived all steps we cannot have $RQ^{k}\supset Q_{j}$ at some step $k$.  Since $RQ^k$ is a union of dyadic squares, the other possibility is $RQ^{k}\cap Q_{j}=\emptyset$ for all $k.$ This means that we can apply the first inequality of Lemma~\ref{lem1} in the first step and the second in all others.

\vspace*{7pt}

For \eqref{equ2}, when we select $Q^{j}$ we have $\omega_{j}(Q^{j})\ge M \ell (Q^{j})$ and after performing the annulus and the disc constructions, we get $\omega_{j+1}(B^{j})\ge C(\varepsilon)\, \omega_{j} (Q^{j})\ge C(\varepsilon) \, M\ell (Q^{j})$. If $k>j$ there are three possibilities:
i)~$B^{j}\subset RQ^{k}\backslash Q^{k},$ in which case $B^{j}$ has  disappeared and $j$ would not be in $S$; ii)~$B^{j}\cap (RQ^{k}\backslash Q^{k})=\emptyset$ in which case $\omega_{k+1}(B^{j})\ge \omega_{j+1}(B^{j})$ and iii)~$B^{j}\cap \partial (RQ^{k}\backslash Q^{k})\ne \emptyset$.

\begin{figure}[ht]
\begin{center}
\includegraphics{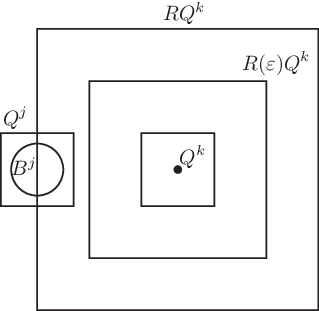}
\end{center}
\end{figure}

\noindent
In this last case we have $\ell (Q^{k})\ge \ell (Q^{j})$ since otherwise $Q^{k}$ would had disappeared. But now since $R=2+R_0(\varepsilon)$ we get that $B^{j}\cap (R_0(\varepsilon)Q^{k}\backslash Q^{k})=\emptyset$ and so $\omega_{k+1}(B^{j})\ge \omega_{j+1}(B^{j})$ by Lemma \ref{lem1} part b). At the end we obtain
$$
\omega^{*} (Q^{j})\ge \omega^{*} (B^{j})\ge \omega_{k+1}(B^{j})\ge \omega_{j+1}(B^{j})\ge C(\varepsilon)\, \omega_{j}(Q^{j})\ge C(\varepsilon) \, M\ell (Q^{j}).
$$

\pagebreak

We will also need the following estimate.

If $z_{0}\in Q_{j}$, $j\in T$ (or $z_{0}\in Q^{k}$, $k\in S$) and $r \ge \ell (Q_{j})$ ($r\ge \ell (Q^{k})$), then
\begin{equation}\label{equ3}
 \omega^{*}\{ |z-z_{0}| <r\} \le CMr.
\end{equation}
Let us discuss the case of $Q_{j}$, $z_{0}\in Q_{j}$. We remark that if $Q$ is a dyadic square with $Q\supset Q_{j}$,
then one has $\omega^{*} (Q)\le M\ell (Q)$ because otherwise the process would not have been stopped.

\vspace*{15pt}

\begin{minipage}[c]{0.35\textwidth}
\raggedright
\includegraphics{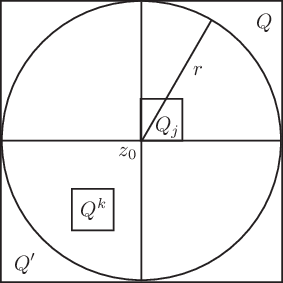}\hfill
\end{minipage}\hspace*{.45cm}
\begin{minipage}[c]{0.55\textwidth}
Take now a dyadic square $Q\supset Q_{j}$ with side length~$2^{m}\ell (Q_{j})$ such that $r \le 2^m \ell(Q_j) \le 2r$.
We just said that $\omega^{*} (Q)\le Mr$. Now the disc~$\{|z-z_{0}|<r \}$ is contained in $4$~dyadic squares of the same side length
as~$Q$. Take one of these squares $Q'$ different from $Q.$ If $Q'$ does not contain any $Q_{j'}$ or $Q^k$ then $\omega^{*}(Q')=0.$
Otherwise $\omega^{*}(Q') \le M r.$

The case $z_{0}\in Q^{k}$ is dealt with similarly.
\end{minipage}

\vspace*{15pt}

The next lemma shows that the union of the family of squares $\{Q_j\}_{j \in T}$ and a dilation of the family $\{Q_k\}_{k \in S}$
contains~$K$.

\begin{lem}\label{lem2}
$K\subset \bigcup\limits_{k\in S}2RQ^{k}\cup \bigcup\limits_{j\in T}Q_{j}$.
\end{lem}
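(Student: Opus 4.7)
Let $x\in K$ be arbitrary. Since $K=\bigcup_j(K\cap Q_j)$, choose an initial index $j$ with $x\in Q_j$. If $j\in T$ we are done, since $x\in Q_j\subset\bigcup_{j'\in T}Q_{j'}$. Assume $j\notin T$, so $B_j$ is affected (absorbed or removed) at some first step $n_1+1$ of the modification process. The plan is to trace the history of $B_j$ through successive destructions, showing that each eventually leads either to a surviving $Q_{j'}$, $j'\in T$, or to some $2RQ^k$ with $k\in S$.

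Because $Q_j$ is a dyadic square of side $\rho$ and $Q^{n_1}$ is dyadic of side $\geq\rho$, dyadic nesting leaves only two options: either $Q_j\subset Q^{n_1}$ (absorption of $B_j$ into $B^{n_1}$ by the disc construction) or $Q_j\cap Q^{n_1}=\emptyset$ with $B_j$ meeting the open annulus $(RQ^{n_1}\setminus Q^{n_1})^{0}$ (removal of $B_j$ by the annulus construction). In both cases $Q_j\cap RQ^{n_1}\neq\emptyset$, and since $\ell(Q_j)\leq\ell(Q^{n_1})$ and $R\geq 2$, a short geometric calculation (of the kind used in the derivation of~\eqref{equ3}) yields $Q_j\subset 2RQ^{n_1}$. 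If $n_1\in S$ we are done.

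If $n_1\notin S$, then $B^{n_1}$ is itself destroyed at some later step $n_2+1$ either by absorption ($B^{n_1}\subset Q^{n_2}$) or by annulus removal ($B^{n_1}\subset RQ^{n_2}\setminus Q^{n_2}$), precisely as in the three-case analysis for~\eqref{equ2}. In the absorption sub-case, the rule $Q^{n_2}\not\subset Q^{n_1}$ combined with dyadic nesting forces $Q^{n_1}\subsetneq Q^{n_2}$ with $\ell(Q^{n_1})\leq\ell(Q^{n_2})/2$. In the removal sub-case the same rule forces $Q^{n_1}$ and $Q^{n_2}$ to have essentially disjoint interiors, and comparing the $\ell^\infty$-distance between their centres---lower bound $(\ell(Q^{n_1})+\ell(Q^{n_2}))/2$ from disjointness, upper bound $R\ell(Q^{n_2})/2$ since the centre of $Q^{n_1}$ (which coincides with the centre of $B^{n_1}$) lies in $RQ^{n_2}$---yields $\ell(Q^{n_1})\leq(R-1)\ell(Q^{n_2})$ and hence $Q^{n_1}\subset 2RQ^{n_2}$.

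Iterating this argument produces a strictly increasing sequence $n_1<n_2<\cdots$, which must terminate at some $n_L\in S$ because the modification process has only finitely many steps; propagating the inclusion $Q_j\subset 2RQ^{n_i}$ forward along the chain then yields $Q_j\subset 2RQ^{n_L}$. The main obstacle is preserving the dilation factor $2R$ at each transition: the strict dyadic nesting $\ell(Q^{n_i})\leq\ell(Q^{n_{i+1}})/2$ in the absorption sub-case gives $2RQ^{n_i}\subset 2RQ^{n_{i+1}}$ for $R\geq 1$, while the sharp bound $\ell(Q^{n_i})\leq(R-1)\ell(Q^{n_{i+1}})$ in the removal sub-case is designed precisely to guarantee $Q^{n_i}\subset 2RQ^{n_{i+1}}$; the delicate point is verifying that mixed sequences of absorption and removal transitions do not accumulate multiplicative constants in the dilation, which uses that $R$ is taken large enough (recall $R\geq 2+R_0(\varepsilon)$).
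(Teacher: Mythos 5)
Your overall strategy --- following the chain of destructions until it terminates at an index of $S$ --- is the same as the paper's, but the decisive quantitative step is missing. In the annulus-removal transition you only establish $\ell(Q^{n_i})\le (R-1)\,\ell(Q^{n_{i+1}})$, which still permits the destroyed square to be much \emph{larger} than its destroyer. With that bound the inclusions do not telescope: at each removal step the centre can drift by about $\tfrac{R}{2}\ell(Q^{n_{i+1}})$ while the side lengths along the chain may shrink, so after $L$ transitions the best one can conclude is an inclusion into a dilation whose factor grows like a power of $R$ in $L$, not the uniform factor $2R$ demanded by the statement. You flag exactly this difficulty at the end, but the proposed remedy ``$R$ large enough'' cannot work: enlarging $R$ only worsens the admissible size inversion $(R-1)\ell$ and does nothing to bound the length of the chain; moreover the dilation constant in the lemma must be $2R$ independently of how many steps the chain has.

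The missing idea is not geometric but dynamical: the side lengths are forced to \emph{double} along the chain, in the removal case as well as in the absorption case, because of the selection rule and the time order. If one had $\ell(Q^{n_i})\ge \ell(Q^{n_{i+1}})$ while $B^{n_i}$ (equivalently $Q^{n_i}$) sits in $RQ^{n_{i+1}}\setminus Q^{n_{i+1}}$, then symmetrically $Q^{n_{i+1}}\subset RQ^{n_i}\setminus Q^{n_i}$, so the part of the compact set inside $Q^{n_{i+1}}$ was already wiped out at the earlier step $n_i$; but $Q^{n_{i+1}}$ was selected at the later step $n_{i+1}$ with $\omega_{n_{i+1}}(Q^{n_{i+1}})\ge M\ell(Q^{n_{i+1}})>0$, a contradiction. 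Hence $\ell(Q^{n_{i+1}})> \ell(Q^{n_i})$, and by dyadicity $\ell(Q^{n_{i+1}})\ge 2\,\ell(Q^{n_i})$. This is precisely the inequality the paper extracts (``otherwise $Q^{j_2}$ would have disappeared''), and it is what makes the centre drifts sum as a geometric series, so that the two-step bookkeeping ($Q\subset RQ^{j_{n-1}}$ and $Q^{j_{n-1}}\subset RQ^{j_n}$) yields the clean factor $2R$. Once you insert this doubling into your removal sub-case (your absorption sub-case already has it), your argument closes and coincides in substance with the paper's proof.
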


\begin{proof}
Recall that now $K=K_{pq}=\bigcup\limits_{Q\in \mathcal{G}_{pq}}K\cap Q$. So let $Q\in \mathcal{G}_{pq}$ and $E=K\cap Q$. If $Q=Q_{j}$ for some $j\in T$ then $E \subset Q_{j}$ and so $E\subset \bigcup\limits_{k}2RQ^{k}\cup\bigcup\limits_{j\in T}Q_{j}$.

If $Q\ne Q_{j}$ for every $j\in T$ then there is a  first index~$j_{1}$ such that $Q\subset RQ^{j_{1}}\backslash Q^{j_{1}}$;
if~$j_{1}\in S$ then $Q\subset RQ^{j_{1}}$, $j_{1}\in S$, and we are done. If $j_{1}\notin S$ there is  a first index~$j_{2}$
such that $Q^{j_{1}}\subset RQ^{j_{2}}\backslash Q^{j_{2}}$. In this case $\ell (Q^{j_{2}})\ge 2\ell (Q^{j_{1}})$ because if
we had $\ell (Q^{j_{1}})\ge \ell (Q^{j_{2}})$ then $Q^{j_{2}}\subset RQ^{j_{1}}$ and $Q^{j_{2}}\subset R {Q^{j_{1}}}\backslash
Q^{j_{1}}$, so that $Q^{j_{2}}$ would have disappeared. If $j_{2}\in S$ we have $Q\subset R Q^{j_{2}}$ and we are done. If $j_{2}\notin S$ there is a first~$j_{3}$ such that
$$
Q^{j_{2}}\subset RQ^{j_{3}}\backslash Q^{j_{3}}
$$
and so on.

We get a sequence $j_{1}< j_{2}<\dotsb <j_{n}$ with $j_{1},\dotsc,j_{n-1}\notin S$, $j_{n}\in S$ so that
$Q^{j_{k}}\subset RQ^{j_{k+1}}\backslash Q^{j_{k+1}}$ and $ \ell( Q^{j_{i+1}}) \geq 2 \ell (Q^{j_{i}})$, which implies $Q\subset 2RQ^{j_{n}}$.
The double radius appears because we need to argue on two steps: in the first we use that $Q^{j_{n-1}} \subset R Q^{j_n}$ and in the
second that $Q \subset RQ^{j_{n-1}}.$
\end{proof}

\section{Surrounding \boldmath$K^{*}$ by level curves of the Green function}\label{sec4}

To continue the proof of the Theorem, let $Q$ be a square~$Q=Q_{j}$, $j\in T$ or $Q=Q^{k}$, $k\in S$ and let $B$ be
the corresponding disc.
Let~$g(z)=g(\Omega^{*},z,\infty)$ be the Green function of the domain $\Omega^*.$
The goal of this section is  to find a closed curve~$\sigma$ surrounding $B$,  contained in a level set of~$g$,  and such that
\begin{equation}\label{gradsigma}
|\nabla g(z)|\le CM^{2}\log 1/\ell(Q), \quad  z\in \sigma,
\end{equation}
for a positive constant $C.$

The Green function $g$ is the logarithmic potential of the equilibrium measure plus the Robin constant,
that is,
\begin{equation*}
\begin{split}
g(z)&=\int_{K^{*}}\log |z-w|\,d\omega^{*}(w)+\gamma_{K^{*}}\\*[7pt]
&=\int_{B}\log |z-w|\,d\omega^{*}(w)+
\int_{K^{*}\backslash B}\log |z-w|\,d\omega^{*}(w)+\gamma_{K^{*}}=: u(z)+v(z)+\gamma_{K^{*}}.
\end{split}
\end{equation*}
We have the estimate
\begin{equation}\label{equ4}
|\nabla v(z)|\le \int_{K^{*}\backslash B}\frac{d\omega^{*}(w)} {|z-w|} \le CM\log 1/\ell(Q), \quad z \in Q\setminus B.
\end{equation}
To show this inequality, fix $z\in Q\backslash B$ and set $\omega^{*}(t)=\omega^{*}(B(z,t))$. We have
\begin{equation*}
\begin{split}
\int_{K^{*}\backslash B}\frac{d\omega^{*}(w)}{|z-w|} &\le \int^{1}_{\ell(Q)}\frac{d\omega^{*}(t)}{t}\le
\omega^{*}(B(z,1))+\int^{1}_{\ell(Q)}\frac{\omega^{*}(t)}{t^{2}}\,dt \\*[7pt]
&\le
1+CM \int^{1}_{\ell(Q)}\frac{dt}{t}\le 1+CM \log 1/\ell(Q) \le CM\log 1/\ell(Q),
\end{split}
\end{equation*}
where we have used~\eqref{equ3}.

\vspace*{7pt}

We would like  to estimate the derivative~$\frac{\partial u}{\partial r}(z)$ from below.
Assume for simplicity that the center of the square~$Q$, and so of the disc~$B$, is the origin, and write $z=re^{i\theta}$.

Since
$$
u (re^{i\theta})=\frac{1}{2} \int_{B}\log |re^{i\theta}-w|^{2}\,d\omega^{*}(w),
$$
we have
\begin{equation*}
\begin{split}
\frac{\partial u}{\partial r}(z)&=\frac{1}{2}\int_{B}\frac{1}{|re^{i\theta}-w|^{2}}\, \frac{\partial}{\partial r}
\left((re^{i\theta}-w)(re^{-i\theta}-\bar w)\right)\,d\omega^{*}(w)\\*[9pt]
&=\int_{B}\operatorname{Re}
\left( \frac{(z-w)\, \bar z}{|z-w|^{2}\, |z|}\right)\,d\omega^{*}(w),
\end{split}
\end{equation*}
which in particular tells us that  $\frac{\partial u}{\partial r}(z) \ge 0$.

\vspace*{7pt}

Now we write
$$
\operatorname{Re}\left(\frac{(z-w)\,\bar z}{|z-w|^{2}|z|}\right)=\frac{1}{|z-w|} \left\langle \frac{z-w}{|z-w|},\frac{z}{|z|}\right\rangle
$$
and we look for the minimum value of $\left\langle\frac{z-w}{|z-w|},\frac{z}{|z|}\right\rangle$ when $|w|= \tau$, $\tau$ being
 the radius~$r(B)$ of~$B$.

\begin{figure}[ht]
\begin{center}
\includegraphics{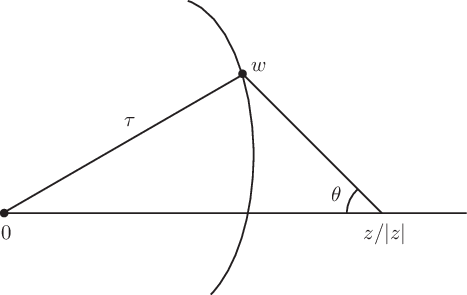}
\end{center}
\vspace*{-15pt}
\end{figure}

Assuming that $\frac{z}{|z|}=1$,  set $\left\langle \frac{z-w}{|z-w|},1\right\rangle=\cos \theta$ (see the figure). The cosine Theorem
yields
$$
\cos \theta=\frac{1}{2|z|}\left(|z-w|+\frac{|z|^{2}-\tau^{2}}{|z-w|}\right)
$$
so that the minimum is attained for
$$
|z-w|=\sqrt{|z|^{2}-\tau^{2}},
$$
that is, when $z-w$ is orthogonal to~$w$.

We then have
$$
\operatorname{Re}\left(\frac{(z-w)\, \bar z}{|z-w|^{2}|z|}\right)\ge \frac{\sqrt{|z|-\tau}\, \sqrt{|z|+\tau}}{|z-w| |z|} \ge
\frac{\sqrt{|z|-\tau}}{|z| \,\sqrt{|z|+\tau}},
$$
and also
$$
\frac{\sqrt{|z|-\tau}}{|z| \,\sqrt{|z|+\tau} }\ge\frac{1}{|z|}\left(1-\frac{\tau}{|z|}\right).
$$
Returning to the case of a square~$Q$ centered at the point~$z_{0}$ with $\tau=r(B)$ we get
the estimate of $\frac{\partial u}{\partial r}(z)$ we are looking for, namely,
\begin{equation}\label{eq5}
\frac{\partial u}{\partial r}(z)\ge
\frac{\sqrt{|z-z_{0}|-r(B)}}{\sqrt{|z-z_{0}|+r(B)}} \, \frac{\omega^{*}(B)}{|z-z_{0}|} \ge \frac{\omega^{*}(B)}{|z-z_{0}|}-
\frac{r(B)\, \omega^{*}(B)}{|z-z_{0}|^{2}}, \quad |z-z_{0}| > r(B).
\end{equation}

We are now ready to estimate the gradient of the Green function $g$. Define
$$
\alpha=\alpha(B)=\max \left(\frac{\omega^{*}(B)} {M^{2}\log 1/\ell(Q)},2r(B)\right)
$$
and distinguish two cases:

\enlargethispage{3mm}

\vspace*{7pt}
\noindent
Case 1:  $\alpha= 2r(B)$, that is, $\dfrac{\omega^{*}(B)}{M^{2}\log 1/\ell(Q)}\le 2r(B)$.

\vspace*{7pt}

We let $\sigma$ to be the circle $\partial B$ so that we need to prove the estimate
$$
|\nabla g(z)|\le CM^{2}\log 1/\ell(Q), \quad z\in \partial B.
$$

\vspace*{7pt}

This is a consequence of the inequality
\begin{equation}\label{eq6}
\sup_{\partial B}|\nabla g|\le C\inf_{\partial B}|\nabla g|
\end{equation}
for some constant~$C$.


In fact, using \eqref{eq6} one gets
$$
\omega^{*}(B)=\frac{1}{2\pi} \int_{\partial B}\frac{\partial g}{\partial n}\,ds\ge \inf_{\partial B}|\nabla g|\, r(B)
$$
and for $z\in \partial B$
\begin{equation}\label{grad3}
|\nabla g(z)|\le \sup_{\partial B}|\nabla g(z)| \le C\inf_{\partial B}|\nabla g(z)|\le C\, \frac{\omega^{*}(B)}{r(B)}\le CM^{2}\log 1/\ell(Q).
\end{equation}
In order to prove \eqref{eq6} assume that $z_{0}=0$ and take two points $z$ and $z'$ with $|z|=|z'|=2r(B)$. Then we have
$$
m^{-1}g(z')\le g(z)\le mg(z')
$$
for some constant~$m$; this follows by applying Harnack's inequality to discs of radius~$\delta < r(B)$ centered at points on the circle
$\{|z|=2 r(B)\}$, chosen so that the discs of radius~$\delta/2$ cover this circle.

Take now $z$ and $z'$  with $r(B)<|z|=|z'| < 2r(B)$. We also have
$$
m^{-1}g(z')\le g(z)\le mg(z').
$$
Indeed, for $\theta\in [0,2\pi]$, write $g_{\theta}(z)=g(e^{i \theta} z)$, then
$$
m^{-1}g_{\theta}(z)\le g(z)\le mg_{\theta}(z)
$$
holds for $|z|=2r(B)$, and trivially also holds for $|z|=r(B)$, $\theta\in [0,2\pi]$. By the maximum principle we get
$$
m^{-1}g_{\theta}(z)\le g(z)\le mg_{\theta}(z),\quad r(B)\le |z|\le 2r(B),\quad \theta\in [0,2\pi].
$$
As a consequence, for $|z|=|z'|=r(B)$ and $n$, $n'$ the unit exterior normal vectors to~$\partial B$ at~$z$ and~$z'$, we have
$$
\frac{m^{-1}g(z'+tn')}{t}\le \frac{g(z+tn)}{t}\le \frac{mg(t'+tn')}{t}
$$
and so
$$
m^{-1} \frac{\partial g}{\partial n}(z')\le \frac{\partial g}{\partial n}(z)\le m \frac{\partial g}{\partial n}(z'),\quad |z|=|z'|=r(B)
$$
and finally $\sup\limits_{|z|=r(B)}|\nabla g|\le C\inf\limits_{|z|=r(B)}|\nabla g|,$ as required.

\vspace*{7pt}
\noindent
Case 2:  $\alpha> 2r(B)$, that is,  $\alpha=\dfrac{\omega^{*}(B)}{M^{2}\log 1/\ell(Q)}$.

\vspace*{7pt}

We note that
\begin{equation}\label{alfapetit}
\alpha\le \frac{\omega^{*}(Q)}{M^{2}\log 2}\le \frac{2 M\ell (Q)}{M^{2}\log 2} \le \frac{4}{M}\, \ell(Q).
\end{equation}
The inequality~$\omega^{*}(Q)\le M\ell (Q)$, for $Q=Q_{j}$, comes from the fact that $Q_j$ has survived the process to get  to~$\omega^{*}$.
If $Q=Q^{k}$, take the dyadic square~$\tilde{Q}$ with side length~$2\, \ell (Q^{k})$ and containing~$Q^{k}$. Since the process has
 stopped,  $\omega^{*}(Q^{k}) \le \omega^{*}(\tilde{Q}) \le M\ell (\tilde{Q}) = 2 M\ell (Q)$.
 
Taking in \eqref{alfapetit} $M > 8 $, we obtain $\alpha \le \ell(Q)/2$ and so  $\{|z-z_{0}|=\alpha\}\subset Q.$

Now we want to prove that
\begin{equation}\label{eq6primera}
|\nabla g(z)| \le  4\,M^{2}\log 1/\ell(Q), \quad   \alpha \le |z-z_{0}|\le \mu\alpha,
\end{equation}
where $\mu$ is such that $\mu > e^{20 \pi},$  a condition that will be used later. Choosing $M >  8 \mu$ we obtain $ \alpha \mu < \ell(Q)/2$, by \eqref{alfapetit}. Hence the annulus $\alpha \le |z-z_{0}|\le \mu\alpha$ is contained in~$Q\setminus B,$ a fact that will be used in the sequel without further mention.

\vspace*{7pt}

Let us show
\begin{equation}\label{derivadau}
\frac{\partial u}{\partial r}(z)\ge |\nabla v(z)|, \quad \alpha \le |z-z_{0}|\le \mu\alpha.
\end{equation}
By \eqref{eq5} we get
$$
\frac{\partial u}{\partial r}(z)\ge
\frac{\sqrt{|z-z_{0}|-r(B)}}{\sqrt{|z-z_{0}|+r(B)}} \, \frac{\omega^{*}(B)}{|z-z_{0}|} \ge \frac{\sqrt{\alpha-r(B)}}{\sqrt{\alpha+r(B)}}\,
\frac{\omega^{*}(B)}{\mu\alpha}, \quad  \alpha < |z-z_{0}| \le  \mu\alpha,
$$
where we have used that the function $x\to \frac{\sqrt{x-r(B)}}{\sqrt{x+r(B)}}$ is increasing.

Since $\alpha>2r(B),$  taking the quotient $M / \mu$ big enough, we have
\begin{equation*}
\begin{split}
\frac{\partial u}{\partial r}(z)&\ge \frac{1}{\sqrt{3}} \, \frac{\omega^{*}(B)}{\mu\alpha} \\*[7pt]
&=\frac{1}{\sqrt{3}\,\mu} \, M^{2} \log 1/\ell(Q)\ge C\, M\log 1/\ell(Q)\ge |\nabla v(z)|, \quad \alpha \le |z-z_{0}|\le \mu\alpha,
\end{split}
\end{equation*}
by \eqref{equ4}.

Therefore
$$
|\nabla g(z) |\le |\nabla u(z) |+|\nabla v(z) |\le 2|\nabla u(z)|\le 2 \int_{\partial B}\frac{d\omega^{*}(w)}{|z-w|}, \quad \alpha \le |z-z_{0}|\le \mu\alpha,
$$
and $|z-w|\ge |z-z_{0}|-|w-z_{0}|\ge \alpha-r(B)\ge \frac{\alpha}{2},$ which gives
$$|\nabla g(z)| \le 4\, \frac{\omega^{*}(B)}{\alpha} =
4 \,M^{2}\log 1/\ell(Q),  \quad \alpha \le |z-z_{0}|\le \mu\alpha,
$$
as required.

\vspace*{7pt}

Assume $z_0=0$, let $c=\sup \{g(z):|z|=\alpha\}$ and take as $\sigma$ the connected component of~$\{g=c\}$ that contains a point on~$|z|=\alpha$. The curve~$\sigma$ encloses a domain that contains the disc  $\{|z| < \alpha\}.$

\smallskip

\begin{center}
\includegraphics{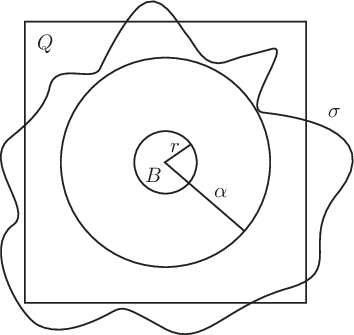}
\end{center}

We claim that $\sigma$ remains inside $\{|z|\le \mu \alpha\},$ which, in view of \eqref{eq6primera}, yields the required estimate \eqref{gradsigma}.

We have
$$
|\nabla u(z)|\le \int_{B}\frac{d\omega^{*}(w)}{|z-w|}\le 2\frac{\omega^{*}(B)}{|z|}, \quad |z|>\alpha,
$$
because
$$
|z-w|\ge |z|-|w| \ge \frac{|z|}{2}+\frac{\alpha}{2}-r(B)>\frac{|z|}{2}.
$$
By \eqref{eq5}
$$
\frac{\partial u}{\partial r}(z)\ge \frac{\omega^{*}(B)}{|z|}-\frac{r(B)\omega^{*}(B)}{|z|^{2}}, \quad |z| > r(B).
$$
Note that
$$
\frac{r(B)\omega^{*}(B)}{|z|^{2}}\le \frac{1}{2}\frac{\omega^{*}(B)}{|z|}
$$
because
$
|z| \ge \alpha \ge 2\,r(B).
$
Then, for $ |z| > \alpha $,
$$
\frac{\partial u}{\partial r}(z)\ge \frac{1}{2}\frac{\omega^{*}(B)}{|z|}\quad\text{and}\quad |\nabla u(z)|\le 4\frac{\partial u}{\partial r}(z).
$$
Therefore, by \eqref{derivadau},
\begin{equation}\label{eq8}
|\nabla g(z)|\le |\nabla  u(z)|+|\nabla v(z)|\le 5\frac{\partial u}{\partial r}(z), \quad \alpha \le |z| \le \mu \alpha.
\end{equation}

\vspace*{7pt}
\begin{minipage}[c]{0.29\textwidth}
\raggedright
\includegraphics{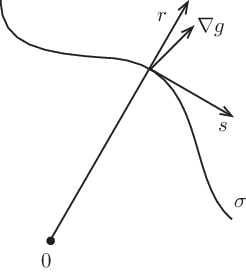}\hfill
\end{minipage}\hspace*{.65cm}
\begin{minipage}[c]{0.61\textwidth}

Note that since the quotient $M/ \mu$ can be taken as large as we want, we can improve \eqref{derivadau} to
\begin{equation*}\label{derivadaum}
\frac{1}{2} \frac{\partial u}{\partial r}(z)\ge |\nabla v(z)|, \quad  \alpha \le |z-z_{0}|\le \mu\alpha.
\end{equation*}
Then
\begin{equation}\label{derivadag}
\begin{split}
\frac{\partial g}{\partial r}(z) &= \frac{\partial u}{\partial r}(z)+\frac{\partial v}{\partial r}(z)   \\*[7pt]
&\ge \frac{\partial u}{\partial r}(z)-|\nabla g(z)| >0, \quad \alpha \le |z| \le \mu \alpha.\!\!
\end{split}
\end{equation}
\end{minipage}

\vspace*{7pt}
\noindent
The curve $\sigma$ contains at least a point $a$ on the circle $\{ |z|  = \alpha\}.$ Consider the maximal subarc $\tau$ of $\sigma$ containing $a$ and contained in the disc $\{ |z|  \le  \mu \alpha\}.$  By \eqref{derivadag},  each ray emanating from the origin intersects $\tau$ only once, and so $\tau$ can be parametrized by the polar angle $\theta$ in the form
$r(\theta) e^{i \theta}$ with  $\theta_1 \le \theta \le \theta _2.$ Without loss of generality assume $ \theta_1 < 0 <  \theta _2$  and $r(0) = a.$

If $\tau = \sigma$ we are done. If not, $r(\theta_2)= \mu \alpha$ and we will reach a contradiction.
If $r$ is the radial direction and $s$~is the orthogonal direction to~$r$, then~\eqref{eq8} yields
$$
\left|\frac{\partial g}{\partial s}(z)\right| \le |\nabla g(z)| \le 5 \frac{\partial u}{\partial r}(z)\le 10 \frac{\partial g}{\partial r}(z).
$$

Since $g(r(\theta)e^{i\theta})=c$, taking the derivative with respect to~$\theta$ one gets
$$
0=\langle \nabla g (r(\theta) e^{i \theta}), r'(\theta) e^{i \theta} + i r(\theta)e^{i \theta} \rangle =
r'(\theta) \frac{\partial g}{\partial r}+ r(\theta)  \frac{\partial g}{\partial s}
$$
that gives
$$
|r'(\theta)| \, \frac{\partial g}{\partial r} =r(\theta) \left| \frac{\partial g}{\partial s}\right|
$$
and so
$$
\frac{|r'(\theta)|}{r(\theta)}\le 10.
$$
Therefore
$$
\log \frac{r(\theta_{2})}{r(0)} =\int^{\theta_{2}}_{0}\frac{r'(\theta)}{r(\theta)}\,d\theta \le \int^{\theta_{2}}_{0}\frac{|r'(\theta)|}{r(\theta)}\,d\theta \le 20\pi
$$
and, recalling the way $\mu$ has been chosen,
$$
r(\theta_{2})\le  e^{20\pi} r(0)= e^{20\pi}  \alpha < \mu \alpha,
$$
which is a contradiction.
 By \eqref{eq6primera}
we obtain the desired inequality \eqref{gradsigma}.

\section{The estimate of the gradient of Green's function on the level curves} \label{sec5}

In the previous section we have exhibited for each disc~$B=B_{j}$, $j\in T$ or $B=B^{k}$, $k\in S$, a simple curve~$\sigma$ contained in a level curve of~$g$ and surrounding $B,$ on which the estimate~\eqref{gradsigma} holds.
Let now $\Gamma$ be the curve formed by the set of $\sigma$'s corresponding to each disc~$B_{j}$ or~$B^{k}$.  Then $\Gamma$ separates $K^{*}$ from infinity.

In this section we prove the estimate
\begin{equation}\label{gradgamma}
\frac{1}{2\pi}\int_{\Gamma}\frac{\partial g}{\partial n}|\mathrm{log}|\nabla g||\,ds \le C \, \log\log (1/\rho).
\end{equation}

Since we are assuming that $M\le \log (1/\rho),$  we have, by \eqref{gradsigma},
$$
\log^{+}|\nabla g (z) |\le \log (CM^{2}\log 1/\ell(Q))\le C \, \log \log (1/\rho), \quad z \in \Gamma.
$$
Note that
$$
\frac{1}{2\pi}\int_{\Gamma}\frac{\partial g}{\partial n}\,ds=\frac{1}{2\pi}\sum_{\sigma}\int_{\sigma}\frac{\partial g}{\partial n}\,ds =\sum_{B}\omega^{*}(B)
$$
which is clear for those terms for which $\sigma=\partial B$ and follows from the divergence theorem for the others, because $\sigma$ surrounds~$\partial B$.

Hence
\begin{equation*}
\begin{split}
\frac{1}{2\pi}\int_{\Gamma}\frac{\partial g}{\partial n}\log^{+}|\nabla g|\,ds &\le  C \, \log\log (1/\rho) \, \frac{1}{2\pi}\int_{\Gamma}\frac{\partial g}{\partial n}\,ds\\*[7pt]
&=C \, \log\log (1/\rho) \, \sum_{B}\omega^{*}(B)\le C \, \log\log (1/\rho).
\end{split}
\end{equation*}

\vspace*{7pt}

In order to estimate the integral on~$\Gamma$ of $\frac{\partial g}{\partial n}\log^{-}|\nabla g|$ we need the following lemma.

\begin{lem}\label{lem3}
Let $g(z)=g(\Omega,z,\infty)$ be the Green function of the domain~$\Omega$ with pole at infinity and let $\Gamma=\bigcup\limits^{N}_{j=1}\Gamma_{j}$ be the union of finitely many closed Jordan curves~$\Gamma_{j}$  so that $\Gamma\subset \{|z|<1\}$, $\Gamma$ separates $K=\mathbb{C}^{*}\backslash \Omega$ from infinity and there are constants $c_{j}$, $j=1,\dotsc,N$ such that $\Gamma_{j}\subset \{g(z)=c_{j}\}$, $j=1,\dotsc, N$. Then
$$
\frac{1}{2\pi} \int_{\Gamma}\frac{\partial g}{\partial n} \, \log |\nabla g|\,ds >-\log 2,
$$
where $n$ is the outward unit normal to~$\Gamma$.
\end{lem}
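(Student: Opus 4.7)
My plan is to derive an explicit identity for $\frac{1}{2\pi}\int_\Gamma\partial_n g\,\log|\nabla g|\,ds$ via Green's second identity, and then invoke the pointwise upper bound $g(z)<\gamma_K+\log 2$ valid on the unit disc. The identity should resemble the smooth-case heuristic formula $\sum_k g(\xi_k)+\gamma_K$ displayed in Section~\ref{sec1}, modified to account for the fact that $\Gamma$ lies at levels $c_j>0$ rather than on $\{g=0\}$.

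Set $f=2\,\partial g$; then $f$ is holomorphic on $\Omega$ with $|f|=|\nabla g|$, and $\Delta\log|f|=2\pi\sum_k m_k\,\delta_{\xi_k}$ distributionally, summed over the critical points $\xi_k$ of $g$ with multiplicities $m_k$. At infinity, $g(z)=\log|z|+\gamma_K+o(1)$ and $f(z)=z^{-1}(1+o(1))$, so $\log|f(z)|=-\log|z|+o(1)$. I apply Green's second identity to the pair $(g,\log|f|)$ on $V_R=V\cap\{|z|<R\}$, with $V$ the unbounded component of $\mathbb{C}^*\setminus\Gamma$, and let $R\to\infty$. The large-circle contribution converges to $-2\pi\gamma_K$ once the $\log R$ terms cancel; combined with $g\equiv c_j$ on $\Gamma_j\subset\partial V$ and $\partial_\nu=-\partial_n$ (the outward normal to $V$ points toward $K$), this yields, for the components of $\Gamma$ lying on $\partial V$,
$$
\frac{1}{2\pi}\int_{\Gamma\cap\partial V}\partial_n g\,\log|\nabla g|\,ds = \sum_{\xi_k\in V}m_k\,g(\xi_k) + \gamma_K + \frac{1}{2\pi}\sum_{\Gamma_j\subset\partial V}c_j\,I_j,
$$
with $I_j:=\int_{\Gamma_j}\partial_n\log|\nabla g|\,ds$. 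Components of $\Gamma$ lying inside bounded complementary regions of $\mathbb{C}^*\setminus\Gamma$ are incorporated by applying the same identity on each such bounded subdomain, producing additional terms of the same form. A parallel Green identity applied to $\log|f|$ alone produces the companion relation $\sum_{\Gamma_j\subset\partial V}I_j=-2\pi(1+\sum_{\xi_k\in V}m_k)$, together with analogous relations in the bounded subregions.

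The decisive arithmetic input is the estimate
$$c_j<\gamma_K+\log 2\qquad\text{for every }j,$$
obtained from the representation $g(z)=\int\log|z-w|\,d\omega(w)+\gamma_K$, with $\omega$ the equilibrium measure on $K$. Since $\Gamma\subset\{|z|<1\}$ separates $K$ from $\infty$, one has $K\subset\{|z|<1\}$ as well, so $|z-w|<|z|+|w|<2$ for $z\in\Gamma_j$ and $w\in K$, giving $c_j=g(z)<\gamma_K+\log 2$.

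Finally, I combine the pieces. Writing $c_j=c_*+(c_j-c_*)$ for the reference level $c_*:=\min_j c_j$ and substituting the companion identity, the formula rearranges to
$$
\frac{1}{2\pi}\int_\Gamma\partial_n g\,\log|\nabla g|\,ds = (\gamma_K-c_*) + \sum_k m_k\,(g(\xi_k)-c_*) + \frac{1}{2\pi}\sum_j(c_j-c_*)\,I_j.
$$
The first summand exceeds $-\log 2$ by the estimate above, and the second is $\ge 0$ by the maximum principle ($g\ge c_*$ on $V$). To control the remaining term I interpret each $I_j$ via the argument principle as $2\pi$ times a winding number of $f=2\,\partial g$ along $\Gamma_j$, and the combinatorial structure of the sublevel sets of $g$ (components of $\{g=c\}$ merge only through critical points of $g$) yields a favorable pairing of the $c_j$'s with the critical values $g(\xi_k)$ that makes this last summand nonnegative. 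The main obstacle I expect is precisely this last step: making the Morse-theoretic pairing rigorous in full generality, especially in the presence of nested $\Gamma_j$'s (where one $\Gamma_j$ sits inside the Jordan interior of another), which requires careful argument-principle bookkeeping in each bounded complementary component of $\mathbb{C}^*\setminus\Gamma$.
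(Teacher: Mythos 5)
Your Green-identity skeleton is the same as the paper's: the boundary term at infinity produces $\gamma_{K}$, the critical points of $g$ in the unbounded component $V$ of $\mathbb{C}^{*}\setminus\Gamma$ enter through $\Delta\log|\partial g|=2\pi\sum_k m_k\delta_{\xi_k}$, each $\Gamma_j$ contributes $c_j I_j$ with $I_j=\int_{\Gamma_j}\partial_n\log|\nabla g|\,ds$, and the only use of $\Gamma\subset\{|z|<1\}$ is the bound $g\le\gamma_K+\log 2$ there. But the proof is not closed, and the step you defer is exactly where the content of the lemma lies. Since $\nabla g$ is normal to each level curve, the argument principle gives $I_j=-2\pi$ for \emph{every} $j$ (not merely your aggregate relation $\sum_j I_j=-2\pi(1+\sum_k m_k)$, which then forces $\sum_k m_k=N-1$). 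Hence your third summand equals $-\sum_j(c_j-c_*)\le 0$; it cannot be made nonnegative by any pairing on its own, contrary to what you assert. What has to be proved is that the second and third summands together are nonnegative, i.e.\ (ordering $c_1\le\dots\le c_N$)
\begin{equation*}
\sum_k m_k\,g(\xi_k)\;\ge\;\sum_{j=2}^{N}c_j,
\end{equation*}
and this is the heart of the paper's argument: critical points outside $\Gamma$ occur exactly where components of a level set $\{g=c\}$ merge; the first merging encloses at least two of the curves, hence its critical value is $\ge c_2$, a merging of three components contributes multiplicity two and value $\ge c_3$, and inductively the $N-1$ critical values (with multiplicity) dominate $c_2,\dots,c_N$. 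Combined with $c_1\le\gamma_K+\log 2$ this gives the stated bound. Your proposal explicitly labels this Morse-theoretic step as the main obstacle and leaves it unproved, so the decisive inequality is missing, and the way you split the terms misstates where the positivity must come from.

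A smaller point: the paper sidesteps your concern about nested curves by reading the statement with no $\Gamma_j$ inside another (the bounded regions they bound are disjoint), which is what occurs in the application, and it replaces $K$ by $\{g\le\varepsilon\}$ so that $\Omega$ is finitely connected with smooth boundary and the critical points outside $\Gamma$ are finitely many; it also uses the counting fact that a level Jordan curve surrounding $\beta$ components of $K$ encloses exactly $\beta-1$ critical points, which is what pins down the number $N-1$. Your plan to run separate Green identities in bounded complementary components is unnecessary under that reading and would only complicate the bookkeeping you already identify as delicate.
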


The proof of this lemma will be discussed in Section~\ref{sec6}.

\vspace*{7pt}

By Lemma~\ref{lem3} we have
$$
\frac{1}{2\pi}\int_{\Gamma}\frac{\partial g}{\partial n} \, \log^{-}|\nabla g|\,ds\le \frac{1}{2\pi}\int_{\Gamma}\frac{\partial g}{\partial n} \, \log^{+}|\nabla g|\,ds+\log 2,
$$
which completes the proof of \eqref{gradgamma}.

\section{End of the proof of the Theorem} 
Recall from Section~\ref{sec1} that for a fixed $\varepsilon>0$ and for each $\eta>0$ we have to find a set~$A\subset K$ with $M_{h}(A) <\eta$ and $\omega(K\backslash A)<\eta$, where $h(r)=r^{1+\varepsilon}$.

\vspace*{7pt}

Decompose the set of indices $T$ as  $T=T_{1}\cup T_{2}$ with
\begin{align*}
T_{1}&=\{j\in T: \omega^{*}(B_{j})\ge \rho^{\varepsilon/2}\, r_{j}\},\\
T_{2}&=\{j\in T: \omega^{*}(B_{j})\le \rho^{\varepsilon/2}\, r_{j}\},
\end{align*}
where $r_{j}=r(B_{j})$.

Set
$$
A=\left[K\cap \left(\bigcup_{k\in S}2RQ^{k}\right)\right]\cup \left[K\cap \left(\bigcup_{j\in T_{1}}Q_{j}\right)\right].
$$
We know, by Lemma~\ref{lem2}, that
$$
K\backslash A=\bigcup_{j\in T_{2}}(K\cap Q_{j}).
$$
In order to prove that $M_{h}(A)<\eta$ we need the following well known estimate.

\begin{lem}\label{lem4}
Let $E$ be a compact set and let $h(r)= r^s$ for some $s>0$. Then there exists a positive constant $C=C(s)$ such that
$$
M_{h}(E)\le C\, (\operatorname{Cap}E)^{s}.
$$
\end{lem}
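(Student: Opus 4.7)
The plan is to use Frostman's lemma as a bridge, converting a lower bound on $M_h(E)$ into the existence of a probability measure on $E$ whose logarithmic energy can be estimated directly from the growth condition. Fix any $t$ with $0 < t < M_h(E)$; Frostman's lemma applied with measure function $h(r) = r^s$ produces a positive Borel measure $\mu$ supported on $E$ satisfying $\mu(B(z,r)) \le r^s$ for every $z, r$ together with $\mu(E) \ge c_0\, t$, where $c_0 > 0$ is an absolute constant. Normalizing, $\tilde\mu := \mu/\mu(E)$ is a probability measure on $E$ obeying $\tilde\mu(B(z,r)) \le K r^s$ with $K = 1/(c_0 t)$.

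The core computation is an upper bound for the logarithmic potential. Writing $U^{\tilde\mu}(z) = \int \log(1/|z-w|)\, d\tilde\mu(w)$, dropping the non-positive part (contributed by $|z-w| > 1$) and using the distribution-function representation gives
$$U^{\tilde\mu}(z) \le \int_0^\infty \tilde\mu(B(z, e^{-u}))\, du.$$
Splitting at the threshold $u_0 = (\log K)/s$, where the trivial bound $\tilde\mu \le 1$ meets the Frostman bound $\tilde\mu(B(z, e^{-u})) \le K e^{-us}$, yields $U^{\tilde\mu}(z) \le (\log K + 1)/s$ (valid for every $z$). Integrating against $d\tilde\mu(z)$ gives $I(\tilde\mu) \le (\log K + 1)/s$. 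Since $\gamma_E \le I(\tilde\mu)$ by the variational definition of the Robin constant, we obtain $\operatorname{Cap}(E) = e^{-\gamma_E} \ge e^{-1/s}(c_0 t)^{1/s}$, equivalently $t \le C(s)\,\operatorname{Cap}(E)^s$ with $C(s)$ depending only on $s$. Letting $t \nearrow M_h(E)$ concludes the proof.

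No serious obstacle is expected. The two mild points requiring attention are (i) the treatment of the negative part of $\log(1/|z-w|)$ when $\operatorname{diam}(E) > 1$, which is handled for free by discarding it since we only need an upper bound on $U^{\tilde\mu}$; and (ii) the regime $K \le 1$ (i.e., $t$ already bounded below by a universal constant), which can be absorbed into $C(s)$ by enlarging the constant, since then the desired inequality holds trivially for any compact $E$ with $\operatorname{Cap}(E) > 0$.
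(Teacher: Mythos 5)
Your proof is correct in its main case and is essentially the paper's own argument: a Frostman measure with growth $r^s$ and mass comparable to $M_h(E)$, a layer--cake bound on the logarithmic potential (discarding the negative part) split at the radius where the trivial mass bound meets the growth bound -- exactly the paper's optimal $\tau$ with $\tau s=-\log\mu(E)$ -- and then the variational characterization of $\gamma_E$ applied to the normalized measure, giving $\operatorname{Cap}(E)^s\gtrsim \mu(E)\gtrsim M_h(E)$; normalizing first versus optimizing $\tau$ afterwards is only a cosmetic difference. The one statement that does not stand as written is your disposal of the regime $K\le 1$: there the inequality $t\le C(s)\operatorname{Cap}(E)^s$ is not trivial, because for a compact set of large diameter $t$ can be arbitrarily large while your potential estimate only yields the constant lower bound $\operatorname{Cap}(E)\ge e^{-1/s}$ (take $E$ a disc of huge radius and $s<2$: then $M_h(E)\gtrsim R^s\to\infty$). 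The clean repair is scale invariance: since $M_h(\lambda E)=\lambda^s M_h(E)$ and $\operatorname{Cap}(\lambda E)=\lambda\operatorname{Cap}(E)$, one may assume $\operatorname{diam}E\le 1$, and then $\mu(E)\le(\operatorname{diam}E)^s\le 1$, so $K\ge 1$ and only your main case occurs; note that the paper makes the same implicit assumption when it takes $\tau=-\log\mu(E)/s\ge0$, and in the application $E$ lies in a square of side $\rho<1$, so the issue never arises there.
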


For sake of completeness we give a proof of this lemma in Section~\ref{sec6}.

\vspace*{5pt}

For $h(r)=r^{(1+\varepsilon)},$ inequality \eqref{equ2} yields, using that $\sum_{k\in S} \omega^*(Q^{k}) \leq 1$,
\begin{equation*}
\begin{split}
M_{h}\left( K\cap \left( \bigcup_{k\in S}2R Q^{k}\right)\right)&\lesssim (2R)^{1+\varepsilon} \, \sum_{k\in S} \ell(Q^{k})^{1+\varepsilon}\\*[5pt]
&\le \frac{R^{1+\varepsilon}}{(M \, C(\varepsilon))^{1+\varepsilon}}\, \sum_{k\in S}\omega^{*}(Q^{k})^{1+\varepsilon}\le \left(\frac{R}{M\, C(\varepsilon)}\right)^{1+\varepsilon} \leq \eta
\end{split}
\end{equation*}
for $M$ big enough.  By Lemma \ref{lem4} with $s=1+\varepsilon$ and the definition of the radius of $B_j$ in the disc construction \eqref{radius} we obtain
\begin{equation*}
\begin{split}
M_{h} \left(\bigcup_{j\in T_{1}} ( K \cap Q_{j} ) \right) &\le \sum_{j\in T_{1}}M_{h}(K \cap Q_{j}) \le C\, \sum_{j\in T_{1}} \operatorname{Cap}(K \cap Q_{j})^{1+\varepsilon}  \\*[5pt]
&=C\sum_{j\in T_{1}} r_{j} \, \rho^{\varepsilon}=C\sum_{j\in T_{1}}r_{j} \, \rho^{\varepsilon/2} \, \rho^{\varepsilon/2}\\*[5pt]
&\le C\sum_{j\in T_{1}}\rho^{\varepsilon/2}\, \omega^{*}(B_{j})\le C\rho^{\varepsilon/2}\le\eta
\end{split}
\end{equation*}
provided $\rho$ is small enough.

We have got $M_{h}(A)<\eta$ and it remains to estimate $\omega (K\backslash A)$.

By inequality~\eqref{equ1}
$$
\omega(K\backslash A)=\omega \left(\bigcup_{j\in T_{2}}(K\cap Q_{j})\right)\le \frac{1}{C(\varepsilon)} \, \sum_{j\in T_{2}}\omega^{*}(B_{j}).
$$
Now we remark that for $j\in T_{2}$ we are in the Case~1 of the Section~\ref{sec4}, that is
$$
\frac{\omega^{*}(B_{j})}{M^{2}\log (1/\rho)}\le 2r_{j}.
$$
Indeed, since $\omega^{*}(B_{j})\le \rho^{\varepsilon/2}r_{j}$ it is enough to see that
$$
\rho^{\varepsilon/2} \le 2 M^{2}\log (1/\rho),
$$
which clearly holds for $\rho$ sufficiently small.

\vspace*{7pt}

For $z\in \partial B_{j}$, $j\in T_{2}$, we know by \eqref{grad3} that
$$
|\nabla g(z)|\le C\, \frac{\omega^{*}(B_{j})}{r_{j}}\le C\, \rho^{\varepsilon/2},
$$
so that
$$
\log |\nabla g(z)|\le \log C + \frac{\varepsilon}{2} \log \rho\ \leq \frac{\varepsilon}{4} \log\rho,
$$
for small enough $\rho.$  Hence, for such small $\rho$,
$$
|\mathrm{log} |\nabla g(z)||\ge \frac{\varepsilon}{4}\log (1/\rho).
$$
We then get
\begin{equation*}
\begin{split}
\omega(K\backslash A)&\le \frac{1}{C(\varepsilon)}\sum_{j\in T_{2}}\omega^{*}(B_{j})\le \frac{1}{C(\varepsilon)}\frac{1}{2\pi}\sum_{j\in T_{2}}\int_{\partial B_{j}}\frac{\partial g}{\partial n}\,ds\\*[5pt]
&\le \frac{C}{C(\varepsilon)\, \varepsilon\log (1/\rho)}\, \sum_{j\in T_{2}}\frac{1}{2\pi}\int_{\partial B_{j}}\frac{\partial g}{\partial n}\, |\mathrm{log} |
\nabla g||\,ds\\*[5pt]
&\le \frac{C}{C(\varepsilon)\, \varepsilon\log (1/\rho)}\, \frac{1}{2\pi}\int_{\Gamma}\frac{\partial g}{\partial n}\, |\mathrm{log} |\nabla g||\,ds\\*[5pt]
&\le \frac{C}{\varepsilon\, C(\varepsilon)}\, \frac{\log\log (1/\rho)}{\log (1/\rho)},
\end{split}
\end{equation*}
due to~\eqref{gradgamma}. Thus $\omega(K\backslash A)<\eta$ if $\rho$ is small enough. Therefore for fixed $\varepsilon>0$ and given $\eta>0$, we can choose $M$ and $\rho$ such that the set~$A$ satisfies the desired conclusion.

\section{Proof of the lemmas}\label{sec6}

\subsection{An auxiliary lemma}

To prove Lemma \ref{lem1} we need the following auxiliary result.

\begin{lem}\label{lemfacil}
Let $F\subset \mathbb C$ be a compact set with positive logarithmic capacity. Then there exists 
a probability measure $\mu$ supported in $F$ such that the potential 
$$U_\mu(z) = \int\log\frac1{|z-w|}\,d\mu(w)$$
is a continuous function in $\mathbb C$.
\end{lem}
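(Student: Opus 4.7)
The plan is to construct $\mu$ as the equilibrium measure of a compact subset $F'\subset F$ of positive capacity, chosen so that every boundary point of $\mathbb{C}^{*}\setminus F'$ is regular for the Dirichlet problem. Because $U_\mu$ is always lower semicontinuous on $\mathbb{C}$ and harmonic on the complement of its support, such regularity will force $U_\mu\equiv\gamma_{F'}$ on all of $F'$ (not just quasi-everywhere) via Frostman's theorem, and this upgrades to continuity on all of $\mathbb{C}$.

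To produce $F'$, I would first apply Frostman's theorem to the equilibrium measure $\nu$ of $F$ with Robin constant $\gamma_F$: one has $U_\nu(z)\le\gamma_F$ for every $z$ (this is just positivity of the Green function $g=\gamma_F-U_\nu$ of $\mathbb{C}^{*}\setminus F$), with equality on $F$ outside an exceptional set
$$
E=\bigcup_n E_n,\qquad E_n:=F\cap\{U_\nu\le\gamma_F-1/n\}.
$$
Each $E_n$ is compact (by lower semicontinuity of $U_\nu$) and of logarithmic capacity zero, so $E$ is an $F_\sigma$ polar set. Using the outer regularity of logarithmic capacity, cover each $E_n$ by an open set $V_n$ of very small capacity and put $F':=F\setminus\bigcup_n V_n$, still compact. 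Since $E$ is polar, its removal does not decrease $\operatorname{Cap}(F)$, so for $V_n$ chosen small enough we retain $\operatorname{Cap}(F')>0$. Then set $\mu:=\mu_{F'}$, the equilibrium measure of $F'$.

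The hard part is verifying that this $F'$ is regular, equivalently that Wiener's criterion
$$
\sum_{n=1}^\infty \frac{n}{\log\bigl(1/\operatorname{Cap}(F'\cap A_n(z))\bigr)}=+\infty,\qquad A_n(z):=\{2^{-n-1}\le|w-z|\le 2^{-n}\},
$$
holds at every $z\in F'$. The subtlety is that excising the $V_n$ reduces the annular capacities appearing in this sum, so the $V_n$ must be selected scale-by-scale---small enough on every annulus around every point of $F\setminus E$---so as to keep the Wiener series divergent at every surviving point of $F'$. Orchestrating this quantitative control, namely absorbing the entire polar set $E$ into $\bigcup V_n$ while never destroying the Wiener divergence at a good point, is the main technical obstacle; once it is achieved, Kellogg's theorem delivers $U_\mu\equiv\gamma_{F'}$ on $F'$ and hence continuity of $U_\mu$ throughout $\mathbb{C}$.
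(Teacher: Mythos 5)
There is a genuine gap at exactly the point you flag as ``the main technical obstacle''. Your construction hinges on producing a compact $F'\subset F$ of positive capacity all of whose boundary points are regular, and this is not a routine orchestration of choices of the $V_n$: it is essentially a theorem of Ancona that every compact set of positive capacity contains a regular compact subset of comparable capacity, and its proof is genuinely delicate. As written, your sketch does not address the two mechanisms that make it hard: (i) excising the open sets $V_n$ creates \emph{new} boundary points on $F\cap\partial V_n$ that were interior to the part of $F$ you keep, and regularity must be secured at these new points as well, not only at the surviving points of $F\setminus E$; and (ii) the removal lowers the annular capacities $\operatorname{Cap}(F'\cap A_n(z))$ simultaneously for uncountably many centers $z$ and all scales, and you give no selection scheme that keeps the Wiener series divergent at every surviving point while still swallowing all of the polar set $E$. (The remark that $\operatorname{Cap}(F')>0$ ``for $V_n$ small enough'' also needs care, since logarithmic capacity is not countably subadditive; one must pass to the Wiener/Robin form of capacity to sum the contributions of the $V_n$.) Without either carrying out this construction or invoking Ancona's theorem explicitly, the proof is incomplete. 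The soft part of your argument (regularity of $F'$ plus Frostman gives $U_{\mu}\equiv\gamma_{F'}$ on $F'$, and then lower semicontinuity together with the bound $U_\mu\le\gamma_{F'}$, or the continuity principle, gives continuity on $\mathbb{C}$) is fine.

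For comparison, the paper avoids regularity theory altogether: it keeps the set $F$ and instead shrinks the \emph{measure}. Starting from the equilibrium measure $\sigma$ of $F$, which has $U_\sigma\le\gamma_F<\infty$ $\sigma$-a.e., Egorov's theorem yields a compact $F_0\subset F$ with $\sigma(F_0)>0$ on which the truncated integrals $\int_{|z-w|\le r}\bigl|\log\frac1{|z-w|}\bigr|\,d\sigma(w)$ tend to $0$ uniformly; the restriction $\sigma|_{F_0}$ (normalized) then has a potential whose restriction to $F_0$ is continuous by a direct estimate, and the continuity principle upgrades this to continuity on all of $\mathbb{C}$. The resulting $\mu$ is not an equilibrium measure of any set, but none of the fine boundary analysis (Wiener criterion, Kellogg's theorem) is needed, which is why that route closes in a page while yours stalls at a known hard theorem.
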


\begin{proof}
Let $\sigma$ be the equilibrium measure for $F$, so that $\sigma$ is a probability measure supported on $F$ such that
$$U_\sigma(z) = \int\log\frac1{|z-w|}\,d\sigma(w) \leq \gamma_F<\infty, \quad\text{
for $\sigma$-a.e.\ $z\in F$.}$$
By Egoroff there exists a compact subset $F_0\subset F$ such that $\sigma(F_0) >0$ and
$$\lim_{r\to 0}\int_{|z-w|\leq r}\left|\log\frac1{|z-w|}\right|\,d\sigma(w) =  0,\quad \text{
uniformly on $z\in F_0$.}$$
 It is then clear that
$$\lim_{r\to 0}\int_{|z-w|\leq r}\left|\log\frac1{|z-w|}\right|\,d\sigma|_{F_0}(w) = 0,\quad\text{
uniformly on $z\in F_0$.}$$ 

We claim that $U_{\sigma|_{F_0}}$ is continuous in $\mathbb C$. By the continuity principle of classical Potential Theory it is enough to  show that the restriction of $U_{\sigma|_{F_0}}$ to $F_0$
is continuous in $F_0$. To prove this, for any $\varepsilon \in (0,1/2)$ and $z,z'\in F_0$ such that $|z-z'|\leq 
\varepsilon^2$,
write
\begin{align*}
|U_{\sigma|_{F_0}}(z) - U_{\sigma|_{F_0}}(z')| & \leq \!\int_{|z-w|\leq \varepsilon} \left|\log\frac1{|z-w|}\right|d\sigma|_{F_0}(w) + \!\int_{|z-w|\leq \varepsilon} \left|\log\frac1{|z'-w|}\right|d\sigma|_{F_0}(w) \\
& \quad + \int_{|z-w|> \varepsilon} \left|\log\frac{|z'-w|}{|z-w|}\right| d\sigma|_{F_0}(w).
\end{align*}
The first integral on the right hand side tends to $0$ as $\varepsilon\to0$, and the same happens with the second one, taking into account
that $\{w:|z-w|\leq \varepsilon\}\subset \{w:|z'-w|\leq 2\varepsilon\}$. For the last one, observe that
$$
\frac{|z'-w|}{|z-w|} = 1 + O(\varepsilon), \quad \text{for $w,z,z'$ such that $|z-w|> \varepsilon$ and $|z-z'|\leq
\varepsilon^2$,}$$
and thus, for some constant $C>0$,
$$\int_{|z-w|> \varepsilon} \left|\log\frac{|z'-w|}{|z-w|}\right| d\sigma|_{F_0}(w)
\leq C \varepsilon \sigma(F_0).
$$
Therefore, 
$$\lim_{\varepsilon\to0} \sup_{z,z'\in F_0:|z-z'|\leq \varepsilon^2}|U_{\sigma|_{F_0}}(z) - U_{\sigma|_{F_0}}(z')| =0,$$
which completes the proof of the claim. The lemma follows by taking $\mu=\sigma|_{F_0}/\sigma(F_0)$.
\end{proof}

\subsection{Proof of Lemma~\ref{lem1}}

Changing scale we may assume that $\ell (Q)=1.$ Let $\xi_{0}$ stand for the center of~$Q$.

\begin{proof}[Proof of a)]
By Lemma \ref{lemfacil} there exists a probability
measure $\mu$ supported on $(\Omega\cup E)^c$ such that
the convolution $\log |z| *\mu$ is continuous in $\mathbb C$. Then,
being $\Omega\cup E$ an unbounded domain, 
the Green function~$g(z,\xi)$ of the domain~$\Omega\cup E$ with pole at~$\xi$ can be written in the form
\begin{equation}\label{conda**}
g(z,\xi)=\int \log \frac{|z-a|}{|z-\xi|} \,d\mu(a)
+\iint \log \frac{|w -\xi|}{|w-a|}\,d\mu(a)\,d\omega (\Omega\cup E,w,z), \quad  z \in \Omega \cup E. 
\end{equation}
Indeed, observe 
that, for any fixed $\xi\in \Omega\cup E$, the function defined by the first summand is harmonic in
$(\Omega\cup E\cup\{\infty\})\setminus \{\xi\}$ and continuous up to boundary of $\Omega\cup E$,
and  the 
second summand
solves the Dirichlet problem in $\Omega\cup E$ with continuous boundary data equal to minus the function defined
by the first summand.

Note that both measures $d\mu$ and $d\omega (\Omega\cup E,\cdot,z)$ are supported in 
$\partial\Omega\setminus RQ$.
From \eqref{conda**} it is clear that  the Green function can also be written in the form
\begin{equation}\label{eqgreen*}
g(z,\xi)=\log \frac{1}{|z-\xi|} +h (z,\xi),\quad z\in \Omega\cup E,\quad \xi\in \Omega\cup E,
\end{equation} 
with
\begin{equation}\label{eqghreen*}
h(z,\xi)= \iint \log \frac{|w -\xi|\,|z-a|}{|w-a|}\,d\omega (\Omega\cup E,w,z)\,d\mu(a)
,\quad z\in \Omega\cup E,\quad \xi\in \Omega\cup E
\end{equation}
Clearly
\begin{equation}\label{eq10}
|\nabla_{\xi}h(z,\xi)|\le \left|\int_{\partial \Omega\backslash RQ}\frac{1}{\overline{w}-\overline{\xi}}\,d\omega  (\Omega\cup E,w,z) \right| \le O\left(\frac{1}{R}\right),
\quad \xi\in Q, \quad z \in  \Omega\cup E.
\end{equation}
Next, for a given $z_{0}\in \partial Q$,
 we wish to estimate $h(\xi_{0},z_{0})$ from below. To this end, note that, for all $a\in\operatorname{supp}\mu
 \subset \partial\Omega\setminus RQ$,
 $|z_0-a|\geq \frac12(R-1)\geq R/4\geq \frac12|\xi_0-z_0|$ (because we assume $R\geq2$), and thus, 
 for all $w\in 
\partial\Omega\setminus RQ$, 
$$|w-a|\leq |w-\xi_0| + |\xi_0-z_0| + |z_0-a|  \leq |w-\xi_0| + 3|z_0-a|.$$
Thus,
using the two estimates
$|z_0-a|\geq R/4$ and $|w-\xi_0|\geq \frac12R$, we derive
$$|w-a|\leq |w-\xi_0| \,\frac{|z_0-a|}{R/4} + 3|z_0-a| \,\frac{|w-\xi_0|}{R/2}
= \frac{10|w-\xi_0|\,|z_0-a|}{R}.
$$
Hence,
$$\log \frac{|w -\xi_0|\,|z_0-a|}{|w-a|} \geq \log \frac R{10},\quad w\in\partial\Omega\setminus RQ, 
\quad a\in\partial\Omega\setminus RQ
.$$
Plugging this into \eqref{eqghreen*}, we obtain
\begin{equation}\label{hac}
h(z_0,\xi_0)\geq \log\frac R{10}.
\end{equation}

Let now $\mu_{E}$ and $\mu_{B}$ be the equilibrium measures of~$E$ and $B$ respectively and set
$$
u(z) : =\int_{B} g(z,\xi)\,d\mu_{B}(\xi),\quad
v(z) : =\int_{E}g(z,\xi)\,d\mu_{E}(\xi).
$$
For every $z_{0}\in \partial Q$ one has 
\begin{alignat*}{2}
u(\eta)&=\gamma_{B}+h(z_{0},\xi_{0})+O(1/R), &\quad &\eta\in B,\\
v(\eta)&=\gamma_{E}+h(z_{0},\xi_{0})+O(1/R), &\quad &\eta\in E,
\end{alignat*}
where the constant in $O(1/R)$ is independent of $z_0$.
To see this just write
$$
h(\eta,\xi)=(h(\eta,\xi)-h(\eta,\xi_{0}))+ (h(\xi_{0},\eta)-h(\xi_{0},z_{0}))+h(z_{0},\xi_{0}),
$$
use \eqref{eq10}, the symmetry of the Green's function and the fact that the equilibrium potential of a compact set is equal to the Robin constant on the set (except for an exceptional set of zero capacity).

\vspace*{7pt}

Now since $u= v=0$ on $\partial \Omega\backslash RQ$ one gets
\begin{align*}
u(z)&= \int_{\partial \tilde{\Omega}}u(\xi)\,d\omega (\tilde{\Omega},\xi,z)=\int_{\partial B}u(\xi)\,d\omega
(\tilde{\Omega},\xi,z),\\*[5pt]
v(z)&= \int_{\partial {\Omega}} v(\xi) \,d\omega ({\Omega},\xi,z)=\int_{\partial E}v(\xi)\,d\omega
({\Omega},\xi,z).
\end{align*}
Hence, for  $z\notin K\cup Q$,
\begin{align*}
u(z)&=(\gamma_{B}+h(z_{0},\xi_{0})+O (1/R))\, \omega (\tilde{\Omega},B,z),\\
v(z)&=(\gamma_{E}+h(z_{0},\xi_{0})+O (1/R))\, \omega (\Omega,E,z).
\end{align*}

Assume for the sake of simplicity that $\xi_{0}=0.$  Then by plugging the identity \eqref{eqgreen*} into
the above definitions of $u$ and $v$ we obtain
\begin{alignat*}{2}
u(z)&=\log \frac{1}{|z|} +\int_{B}h(z,\xi)\,d\mu_{B}(\xi),&\quad &z\notin B,\\*[5pt]
v(z)&=\int_{E} \log\frac{1}{|z-\xi|}\,d\mu_{E}(\xi) +\int_{E}h(z,\xi)\,d\mu_{E}(\xi),&\quad &z\notin E.
\end{alignat*}
Set
\begin{equation*}
\begin{split}
\varphi(z)&:=u(z)-v(z)=\int_{E}\left(\log \frac{1}{|z|}-\log\frac{1}{|z-\xi|}\right)\,d\mu_{E}(\xi)\\*[5pt]
&\quad+
\int_{B}h(z,\xi)\,d\mu_{B}(\xi)-\int_{E}h(z,\xi)\,d\mu_{E}(\xi)\\*[5pt]
&=\int_{E}\left(\log \frac{1}{|z|}-\log \frac{1}{|z-\xi|}\right)\,d\mu_{E}(\xi)+\int_{B}(h(z,\xi)-h(z,0))\,d\mu_{B}(\xi)\\*[5pt]
&\quad- \int_{E}(h(z,\xi)-h(z,0))\,d\mu_{E}(\xi).
\end{split}
\end{equation*}
Thus, for $z\in \Omega\backslash RQ$,
\begin{equation*}
\begin{split}
|\varphi(z)|&\le \left|\int_{E}\log \frac{|z-\xi|}{|z|}\,d\mu_{E}(\xi)\right|+\left|\int_{B}(h(z,\xi)-h(z,0))\,d\mu_{B}(\xi)\right| \\*[5pt]
&\quad+
\left|\int_{E}(h(z,\xi)-h(z,0))\right|\,d\mu_{E}(\xi)=O(1/R).
\end{split}
\end{equation*}
 We have used that for $\xi\in E$
$$
\log \frac{|z-\xi|}{|z|}\le \log \frac{|z|+|\xi|}{|z|}\le \log \left(1+\frac{2}{|z|}\right)=O\left(\frac{1}{|z|}\right)
$$
and
$$
\log \frac{|z|}{|z-\xi|}\le \log \left(1-\frac{|z-\xi|-|z|}{|z-\xi|}\right)\le \log \left(1+\frac{|\xi|}{|z-\xi|}\right)=O\left(\frac{1}{|z|}\right).
$$
Therefore
$$
u(z)=v(z)+O(1/|z|), \quad z\in \Omega\backslash RQ.
$$
Recalling that $\operatorname{Cap}(B)=\frac{1}{2}\operatorname{Cap}(E)^{1+\varepsilon}$ one gets
\begin{equation*}
\begin{split}
\omega(\tilde{\Omega},B,z)&=\frac{u(z)}{\gamma_{B}+h(z_{0},0)+O(1/R)}=\frac{v(z)+O(1/|z|)}{\gamma_{E}(1+\varepsilon)+\log 2+h(z_{0},0)+O(1/R)}\\*[7pt]
&=\frac{(\gamma_{E}+h(z_{0},0)+O(1/R))\, \omega (\Omega,E,z)+O(1/|z|)}
{\gamma_{E}(1+\varepsilon)+\log 2+h(z_{0},0)+O(1/R)}.
\end{split}
\end{equation*}
Clearly there exists $R_0(\varepsilon)$ such that for $R>R_0(\varepsilon)$ we have
\begin{equation*}
\begin{split}
\omega(\tilde{\Omega},B,z)&\ge \frac{1}{2}\frac{\gamma_{E}+h(z_{0},0)}{\gamma_{E}(1+\varepsilon)+\log 2+h(z_{0},0)}\, \omega (\Omega,E,z)
+ O\left(\frac{1}{|z|}\right),
\\*[5pt]
\end{split}
\end{equation*}
since the denominator $\gamma_{E}(1+\varepsilon)+\log 2+h(z_{0},0)$ is bounded below away from $0$  by~\eqref{hac}. Appealing again to \eqref{hac} we obtain that, for $R > R_0(\varepsilon)$,
$$
\frac{\gamma_{E}+h(z_{0},0)}{\gamma_{E}(1+\varepsilon)+\log 2+h(z_{0},0)} \geq \frac{1}{2}
$$
and so
$$
\omega(\tilde{\Omega},B,z) \ge \frac{1}{4} \omega (\Omega,E,z)
+ O\left(\frac{1}{|z|}\right).
$$
Letting $z \to\infty$ completes the proof of a) in the lemma.
 \end{proof}

\begin{proof}[Proof of b)]
Assume that $\xi_{0}=0$ and let $U=\left\{|z|< R \right\}$. The Green function $g=g_{U}$  of~$U$ is
$$
g(w,\xi)=\log \left|\frac{1- \frac{w}{R}  \, \frac{ \overline{\xi}}{R}}{\frac{w}{R}  - \frac{{\xi}}{R}}\right|.
$$
Let $g_{B}$ be the Green function of $U\backslash B$ and $g_{E}$ the Green function of  $U\backslash E$.  We claim that
\begin{equation}\label{gbe}
g_{B}(z,\xi)=g(z,\xi)-\int_{\partial B}g(w,\xi)\,d\omega (U\backslash B,w,z), \quad z, \xi \in U\backslash B.
\end{equation}
On one hand,  the right hand side $\phi(z,\xi)$ is a harmonic function of $z$ except for $z=\xi$ where it has a logarithmic pole.  On the other hand,
if $z$ tends to a point in $\partial (U\setminus B)$  then $\phi(z,\xi)$ tends to $0$, owing to the fact that $\int_{\partial B}g(w,\xi)\,d\omega (U\backslash B,w,z)$ is the solution of the Dirichlet problem in $U \setminus B$ with boundary values $g(z,\xi)$ with $\xi$ fixed.

Analogously one obtains
\begin{equation}\label{ge}
g_{E}(z,\xi)=g(z,\xi)-\int_{\partial E}g(w,\xi)\,d\omega (U\backslash E,w,z),\quad z, \xi \in U\backslash E.
\end{equation}
The goal is to prove the inequality
\begin{equation}\label{eq11}
\frac{\partial g_{B}}{\partial n}(z,\xi)\ge \frac{\partial g_{E}}{\partial n}(z,\xi),\quad |z|=\frac{R}{2},\quad \xi\in \partial U,
\end{equation}
which follows from
\begin{equation}\label{gmax}
g_{B}(z,\xi)\ge g_{E}(z,\xi),\quad |z|=\frac{R}{2},\quad   \frac{3}{4} R \le |\xi|  < R .
\end{equation}
Since $g_{B}(z,\xi) = g_{E}(z,\xi), \; |\xi|  = R, \;$ then, by the maximum principle, it is enough to show \eqref{gmax} for $|\xi|  = \frac{3}{4} R.$

We start by proving
\begin{equation}\label{glog}
\log \left(\frac{4}{3}\right) - \frac{C}{R} \le g(w, \xi) \le \log\left(\frac{4}{3}\right) + \frac{C}{R}, \quad |w|\le 1, \quad  |\xi|=\frac{3}{4} R,
\end{equation}
where $C$ is a positive constant and $R$ is sufficiently large.  We have
\begin{equation*}
\begin{split}
g(w, \xi) = &\log\left(\frac{4}{3}\right) + g(w, \xi)-g(0,\xi) = \log\left(\frac{4}{3}\right) + \log \left| 1-\frac{w \overline{\xi}}{R^2}\right| -\log\left|1-\frac{w}{\xi}\right|.
\end{split}
\end{equation*}
The absolute value of each of the last two terms is less than or equal to $C/R$ for some constant $C$ and \eqref{glog} follows.

Inserting \eqref{glog} into \eqref{gbe}  and \eqref{ge}  we get
\begin{alignat*}{3}
g_{B}(z,\xi)&\ge g(z,\xi)-\left(\log\left(\frac{4}{3}\right)+\frac{C}{R}\right)\, \omega (U\backslash B,B,z), &\quad & |z| = \frac{R}{2}, &\quad &|\xi|= \frac{3}{4}R, \\*[5pt]
g_{E}(z,\xi)&\le g(z,\xi)-\left(\log\left(\frac{4}{3}\right)-\frac{C}{R}\right)\, \omega (U\backslash E,E,z), &\quad & |z| = \frac{R}{2}, &\quad &|\xi|= \frac{3}{4}R.
\end{alignat*}

Clearly \eqref{gmax} is a consequence of the two preceding inequalities and the following claim.
\begin{claim}
For $R$ large enough one has 
$$
\left(\log\left(\frac{4}{3}\right)+\frac{C}{R}\right)\omega (U\backslash B,B,z)\le \left(\log\left(\frac{4}{3}\right)-\frac{C}{R}\right)\omega (U\backslash E,E,z),
\quad |z|=\frac{R}{2}.
$$
\end{claim}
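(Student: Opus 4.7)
\medskip
\noindent\textbf{Plan of proof.} The plan is to compute $\omega(U\setminus B,B,z)$ exactly using the concentric–annulus symmetry, estimate $\omega(U\setminus E,E,z)$ asymptotically as $R\to\infty$ via the Green–equilibrium measure of $E$ in $U$, and observe that the resulting gap between the two is of order $1/\log R$, which comfortably absorbs the $O(1/R)$ slack appearing on the two sides of the claim. Since $\xi_0=0$ is the common centre of $U$ and $B$, the set $U\setminus B$ is a true annulus, so
$$
\omega(U\setminus B,B,z)=\frac{\log(R/|z|)}{\log(R/r(B))}=\frac{\log 2}{\log R+\gamma_B}, \qquad |z|=R/2,
$$
with $\gamma_B=-\log r(B)$.

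For $\omega(U\setminus E,E,z)$ I would use the Green–equilibrium measure $\mu$ of $E$ in $U$, that is, the probability measure on $E$ whose Green potential $P(z):=\int g_U(z,\xi)\,d\mu(\xi)$ is constant, equal to $\gamma^*:=\int P\,d\mu$, quasi-everywhere on $E$; a standard maximum-principle argument then identifies $\omega(U\setminus E,E,z)=P(z)/\gamma^*$ on $U\setminus E$. Writing
$$
g_U(w,\xi)=\log|R^2-w\bar\xi|-\log|w-\xi|-\log R,
$$
two uniform expansions suffice. For $w,\xi$ in the unit disc, $g_U(w,\xi)=-\log|w-\xi|+\log R+O(R^{-2})$, which yields $\gamma^*=\gamma_E+\log R+O(R^{-2})$ (and shows that $\mu$ differs by $O(R^{-2})$ from the classical equilibrium measure $\mu_E$). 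For $|w|=R/2$ and $|\xi|\le 1$, the same identity gives $g_U(w,\xi)=\log 2+O(R^{-1})$ uniformly in $\xi$, hence $P(z)=\log 2+O(R^{-1})$ on $\{|z|=R/2\}$. Combining,
$$
\omega(U\setminus E,E,z)=\frac{\log 2}{\log R+\gamma_E}+O\!\left(\frac{1}{R\log R}\right), \qquad |z|=R/2.
$$

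The definition \eqref{radius} of $r(B)$ gives $\gamma_B=\log 2+(1+\varepsilon)\gamma_E$, so $\gamma_B-\gamma_E=\log 2+\varepsilon\gamma_E\ge\log 2$, where I use $\gamma_E\ge\tfrac12\log 2>0$ (because $\operatorname{Cap}(E)\le\sqrt 2/2<1$). Dividing,
$$
\frac{\omega(U\setminus B,B,z)}{\omega(U\setminus E,E,z)}=1-\frac{\gamma_B-\gamma_E}{\log R+\gamma_B}+O(1/R),
$$
and an elementary monotonicity check in $\gamma_E$, valid for $R>2^{1/\varepsilon}$, shows that $(\gamma_B-\gamma_E)/(\log R+\gamma_B)\ge \log 2/(\log R+\log 2)$, which is of order $1/\log R$. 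The Claim is equivalent to the ratio above being at most $1-2C/(\log(4/3)\,R+C)=1-O(1/R)$, so it holds as soon as $1/\log R\gg 1/R$, i.e.\ for $R$ larger than a threshold depending only on $\varepsilon$ and on the absolute constant $C$ of \eqref{glog}. The main technical point will be the second step, namely establishing the identity $\omega(U\setminus E,E,z)=P(z)/\gamma^*$ and securing the uniform asymptotics of $g_U$; the first rests on Green–equilibrium theory for a general compact set of positive capacity, which can be imported from the classical logarithmic-energy theory through the bounded, smooth perturbation $\log|R^2-w\bar\xi|-\log R$, and the second is an elementary manipulation of the explicit Green function of the disc.
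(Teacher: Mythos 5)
Your argument is correct and rests on the same two pillars as the paper's proof of the Claim: the exact formula $\omega(U\setminus B,B,z)=\log 2/(\log R+\gamma_B)$ for $|z|=R/2$ (the paper's \eqref{claimf}) and the equilibrium-potential asymptotics $\omega(U\setminus E,E,z)\approx \log 2/(\log R+\gamma_E)$, combined with the observation that the resulting gap, of relative size about $1/\log R$, absorbs the $C/R$ slack coming from \eqref{glog}. Where you differ is in the device for the $E$-side and in the bookkeeping. The paper never identifies $\omega(U\setminus E,E,\cdot)$ exactly: it only needs a lower bound, obtained from the classical logarithmic equilibrium measure $\mu_E$ via the explicit barrier $\tilde f(z)=\bigl(\log R+U_{\mu_E}(z)\bigr)/(\log R+\gamma_E)-C/\bigl(R(\log R+\gamma_E)\bigr)$, which is harmonic off $E$, at most $1$ on $E$ and at most $0$ on $\partial U$, hence at most $\omega(U\setminus E,E,\cdot)$; it then compares additively, proving $\omega(U\setminus E,E,z)\ge\omega(U\setminus B,B,z)+c\,(\varepsilon\gamma_E+\log 2)/(\log R+\gamma_E)^2-C/\bigl(R(\log R+\gamma_E)\bigr)$ and reducing the Claim to $\omega(U\setminus B,B,z)/R\le C\varepsilon\gamma_E/(\log R+\gamma_E)^2$. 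You instead invoke the Green equilibrium measure of $E$ relative to $U$ and the identity $\omega(U\setminus E,E,\cdot)=P/\gamma^{*}$, which buys a two-sided asymptotic and a clean multiplicative comparison of the ratio with $1-O(1/R)$, at the price of importing Frostman's theorem for Green potentials and the quasi-everywhere identification of $P/\gamma^{*}$ with harmonic measure; note that only the inequality $P/\gamma^{*}\le\omega(U\setminus E,E,\cdot)$ is actually needed, which is exactly the paper's one-sided barrier with $\mu_E$ in place of the Green equilibrium measure, so your parenthetical comparison of $\mu$ with $\mu_E$ can be dropped. Your uniformity claims check out: the error in $\omega(U\setminus E,E,z)$ is $O\bigl(1/(R(\log R+\gamma_E))\bigr)$, so the ratio expansion carries an $O(1/R)$ error uniform in $E$, and the bound $(\gamma_B-\gamma_E)/(\log R+\gamma_B)\ge\log 2/(\log R+\log 2)$ reduces by cross-multiplication to $\varepsilon\log R\ge\log 2$, i.e.\ precisely your threshold $R\ge 2^{1/\varepsilon}$, which is harmless since $R_0$ is allowed to depend on $\varepsilon$.
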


We postpone the proof of the Claim and we proceed to complete the argument for Lemma \ref{lem1}.

Consider a subset $A$ of $\partial\Omega\backslash RQ.$ We want to prove
\begin{equation}\label{endb}
\omega(A,z)\le \tilde{\omega}(A,z), \quad |z|=\frac{R}{2},
\end{equation}
 where $\omega(A,z)$ stands for  $\omega(\Omega,A,z)$ and
$\tilde{\omega}(A,z)$ for $\omega(\tilde{\Omega},A,z)$. Take a point $z_0$ with $|z_{0}|=\frac{R}{2}$ such that
$$
\sup_{|z|=R/2}\frac{\omega(A,z)}{\tilde{\omega}(A,z)}=\frac{\omega(A,z_{0})}{\tilde{\omega}(A,z_0)}.
$$
Assume, to get a contradiction, that 
$\frac{\omega(A,z_{0})}{\tilde{\omega}(A,z_0)}=\lambda>1.$
Then
$$
\lambda \tilde{\omega}(A,z)-\omega(A,z)=\lambda-1>0, \quad  z\in A,
$$
and
$$
\lambda\tilde{\omega}(A,z)-\omega(A,z)\ge 0, \quad |z|=\frac{R}{2}.
$$
The maximum principle yields
$$
\lambda \tilde{\omega}(A,z)-\omega(A,z)>0, \quad z\in
\partial U.
$$
Since $\omega(A,\xi)$ is a harmonic function on $U \setminus E$
vanishing on $\partial E$ and, similarly,  $\tilde{\omega}(A,\xi)$
is a harmonic function on $U \setminus B$ vanishing on $\partial B$,
we get, by \eqref{eq11},
\begin{equation*}
\begin{split}
0&=\lambda \tilde{\omega}(A,z_0)-\omega(A,z_0)\\*[5pt]
&=\frac{1}{2\pi}\int_{\partial U}\frac{\partial g_{B}}{\partial
n}(z_{0},\xi)\lambda\,
\tilde{\omega}(A,\xi)\,ds(\xi)-\frac{1}{2\pi}\int_{\partial
U}\frac{\partial g_{E}}{\partial
n}(z_{0},\xi)\,\omega(A,\xi)\,ds(\xi) \\*[5pt]
& \ge \frac{1}{2\pi} \int_{\partial U}\frac{\partial  g_{B}}{\partial
n}(z_{0},\xi)\left(\lambda
\tilde{\omega}(A, \xi)-\omega(A, \xi)\right)\,ds(\xi)>0,
\end{split}
\end{equation*}
which is a contradiction. Then \eqref{endb} holds.
\vspace*{20pt}

\begin{center}
\includegraphics{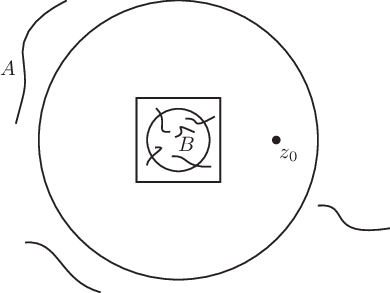}
\end{center}

\vspace*{17pt}

By \eqref{endb} and the maximum principle, 
$\omega(A,z)\le \tilde{\omega}(A,z)$ for $z \in \Omega$ and $|z|\ge
\frac{R}{2},$ and letting $|z|\to\infty$, item~b) of Lemma
\ref{lem1} follows.
\renewcommand{\qedsymbol}{}
\end{proof}

\begin{proof}[Proof of the Claim]
Recall that we are assuming $\ell(Q)=1$, so that for all compact
sets~$K$, $\operatorname{Cap}(E)=\operatorname{Cap}(K\cap Q)\le
1/\sqrt{2}$ and hence $\gamma_{E}\ge \log \sqrt{2} >0.$

Moreover
$$
\gamma_{B}=\gamma_{E}(1+\varepsilon)+\log 2>\gamma_{E}.
$$
Let $r=r(B)$ be the radius of~$B$. The function
$$
\log \left(\frac{R}{|z|}\right)\, \frac{1}{\log (R/r)}, \quad z \in U\setminus
B,
$$
is harmonic on $U\setminus B,$ vanishes on $|z|= R$ and is $1$
on $|z|= r$. Thus it is precisely $\omega(U \backslash B, B,z).$ Since $-\log
r(B)=\gamma_{B}$ we have
\begin{equation}\label{claimf}
 \omega (U\backslash B,B,z)=\log \left(\frac{R}{|z|}\right)\frac{1}{\log
R+\gamma_{B}}, \quad z \in U \setminus B.
\end{equation}

\vspace*{7pt}

We turn  now our attention on $\omega (U\backslash E,E,z)$. Consider the function
$$
f(z)=\int_{E}\log \frac{R}{|z-w|}\,d\mu_{E}(w)\, \frac{1}{\log
R+\gamma_{E}}\quad z \in U \setminus E.
$$
Since  $\int_{E}\log\frac{1}{|z-w|}\,d\mu_{E}(w)=\gamma_{E}$ for
$z\in E,$  except for a set of zero logarithmic capacity,  $f(z)=1$, $z\in E,$ except for a set of
zero logarithmic capacity.

If $w \in E$, $z\in \partial U$ one has $|z-w|=R+O(1)$ and so
$$
\log \frac{R}{|z-w|}=-\log \left(1-\frac{R-|z-w|}{R}\right)=-\log (1+O(1/R))=O(1/R)
$$
and 
$$\log \frac{|z-w|}{R}=-\log
\left(1-\frac{|z-w|-R}{|z-w|}\right)=-\log
\left(1+O(1/R)\right)=O(1/R).$$

Since $f(z)=1, \;\; z \in E$,  we conclude that
$$|f(z)|\le \frac{O(1/R)}{\log R+\gamma_{E}}, \quad z \in \partial U,$$
 so that the function
$$
\tilde{f}(z)=f(z)-\frac{C/R}{\log R+\gamma_{E}}
$$
satisfies $\tilde{f}(z)\le 1$, $z\in E,$ and $\tilde{f}(z)\le 0$,
$z\in \partial U,$ for an appropriate large constant~$C$. It follows
that
$$
\tilde{f}(z)\le\omega (U\backslash E,E,z),\quad z \in U \setminus E.
$$
To estimate this harmonic measure we write
\begin{equation*}
\begin{split}
\omega(U\backslash E,E,z)&\ge \frac{-C}{R(\log
R+\gamma_{E})}+\frac{1}{\log R+\gamma_{E}}\, \int_{E}\left(\log
\frac{R}{|z-w|}-\log \frac{R}{|z|}\right)\,d\mu_{E}(w)\\*[7pt]
&\quad +\frac{1}{\log R+\gamma_{E}}\, \log \frac{R}{|z|}=
T_1+T_2+T_3.
\end{split}
\end{equation*}
By \eqref{claimf}
$$
T_3=\frac{1}{\log R+\gamma_{B}}\, \log
\frac{R}{|z|}+\left(\frac{1}{\log R+\gamma_{E}}-\frac{1}{\log
R+\gamma_{B}}\right)\, \log \frac{R}{|z|}=\omega(U\backslash B,B,z)+
T_4.
$$
For the term $T_4$ we have
$$
T_4=\frac{\gamma_{B}-\gamma_{E}}{(\log R+\gamma_{E})(\log
R+\gamma_{B})}\, \log \frac{R}{|z|}\ge \frac{\varepsilon
\gamma_{E}+\log 2}{(\log R+2\gamma_{E}+\log 2)^{2}},
$$
provided $\varepsilon < 1,$ because $\gamma_{B}\le 2\gamma_{E}+\log
2$.

\noindent For the term $T_2$ we have
$$
|T_2|\le \frac{1}{\log R+\gamma_{E}}\int_{E}\left|\log
\frac{|z-w|}{|z|}\right|\,d\mu_{E}(\omega)
$$
with
$$
\log \frac{|z-w|}{|z|}=\log \left(1+\frac{|z-w|-|z|}{|z|}\right)=\log (1+O(1/R))=O(1/R)
$$
and the same estimate also holds for $\log \frac{|z|}{|z-w|}$.
Hence $$|T_2|\le \frac{C}{R(\log R+\gamma_{E})}.$$ 
Since $|T_1|$ obviously satisfies the same estimate, we conclude that
\begin{equation}\label{eq12}
\omega(U\backslash E,E,z)\ge \omega (U\backslash B,B,z)+\frac{\varepsilon
 \gamma_{E}+\log 2}{(\log R+2\gamma_{E}+\log 2)^{2}}-\frac{C}{R(\log
 R+\gamma_{E})},
\end{equation}
for some positive constant~$C$.

\vspace*{7pt}

 Recall that the claim is
$$
\left(\log\left(\frac{4}{3}\right)+\frac{C}{R}\right)\omega (U\backslash B,B,z)\le \left(\log\left(\frac{4}{3}\right)-\frac{C}{R}\right)\omega (U\backslash E,E,z),
\quad |z|=\frac{R}{2}.
$$
From now to the end of the proof of the claim $z$ denotes a point satisfying  $|z|=\frac{R}{2}.$   By~\eqref{eq12} we get, for $R \ge R_0(\varepsilon)$,
$$
\omega(U\setminus E,E,z)\ge \omega(U\setminus B,B,z)+C\frac{\varepsilon \gamma_{E}}{(\log R+\gamma_{E})^{2}}.
$$
It is sufficient to show
$$
\left(\log\left(\frac{4}{3}\right)\!+\frac{C}{R}\right)\omega(U\setminus B,B,z)\le \left(\log\left(\frac{4}{3}\right)-\frac{C}{R}\right)\!\left(\omega(U\setminus B,B,z)+C\frac{\varepsilon \gamma_{E}}{(\log R+\gamma_{E})^{2}}\right)\!
$$
or
$$
\frac{C \omega(U\setminus B,B,z)}{R}\le -\frac{C}{R}\omega(U\setminus B,B,z)+\left(\log\left(\frac{4}{3}\right)-\frac{C}{R}\right) C\frac{\varepsilon \gamma_{E}}{(\log R+\gamma_{E})^{2}},
$$
which amounts to, for $R \ge R_0(\varepsilon)$, 
$$
 \frac{\omega(U\setminus B,B,z)}{R} \le C\,\frac{\varepsilon \gamma_{E}}{(\log R+\gamma_{E})^{2}}.
$$
By \eqref{claimf}, for $|z|=R/2$,  we have
$$
\omega(U\setminus B,B,z)=\frac{2}{\log R+\gamma_{B}}=\frac{2}{\log R+(1+\varepsilon)\gamma_{E}+\log 2} \le \frac{2}{\log R + \gamma_E}.
$$
Then, for $R \ge R_0(\varepsilon)$, we get
$$
 \frac{\omega(U\setminus B,B,z)}{R} \le \frac{2}{R (\log R + \gamma_E)} \le C\, \frac{\varepsilon \gamma_E }{(\log R + \gamma_E)^2},
$$
where the last inequality is equivalent to
$$
2 (\log R + \gamma_E ) \le C\, R \varepsilon \gamma_E,
$$
which is clearly true for $R$ large enough, because $\gamma_E \ge \log \sqrt{2}.$
\end{proof}

\subsection{Proof of Lemma~\ref{lem3}}

We note that in the statement of Lemma~\ref{lem3} one has to understand that no curve~$\Gamma_{j}$ lies inside another curve~$\Gamma_{k}$; in other words, the bounded connected components of $\mathbb{C} \setminus \Gamma_j,$ \; $1 \le j \le N,$ are disjoint.  Also, replacing $K$ by $\{g\le \varepsilon\}$ for small $\varepsilon>0$, we can assume $\Omega$ is a finitely connected domain with smooth boundary.

\vspace*{7pt}

Recall that we can write the Green function~$g$ as
\begin{equation}\label{greeninfinity}
g(z)=\log |z|+\gamma_{K}+h_{0}(z),
\end{equation}
where 
$$h_{0}(z)=\frac{a_{1}}{z}+\frac{\bar a_{1}}{\bar z}+\frac{a_{2}}{z^{2}}+\frac{\bar a_{2}}{\bar z^{2}}+\dotsb$$
is harmonic and satisfies $h_{0}(\infty)=0$.

\vspace*{7pt}

Let $\{\xi_{k}\}$ be the set of critical points of~$g$ that lie outside~$\Gamma$. First of all we note that there is only a finite number of these critical points. Indeed, the $\xi_{k}$'s are the zeros of~$\partial g$, which is a holomorphic function on $\Omega=\mathbb{C}^* \setminus K$ vanishing at infinity.  Hence the critical points can accumulate only on~$K$ and so outside~$\Gamma$ there are only finitely many, say $\xi_1, \dots, \xi_L$.

\begin{center}
\includegraphics{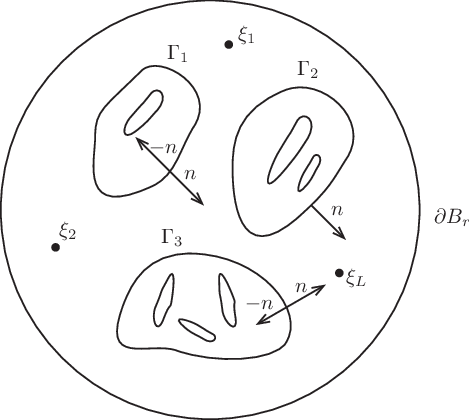}
\end{center}

We want to show the equality
$$
\frac{1}{2\pi}\int_{\Gamma}\frac{\partial g}{\partial
n}\, \log |\nabla g|\,ds=\sum^{L}_{k=1}g
(\xi_{k})+\sum^{N}_{j=1}c_{j}\frac{1}{2\pi}\int_{\Gamma_{j}}\frac{\partial}{\partial
n}\, \log |\nabla g|\,ds+\gamma_{K}.
$$
Let $B_{r}$ be the disc centered at the origin of radius~$r$ big enough to contain the unit disc and all the critical points of~$g$. Green's formula gives
\begin{multline*}
-\frac{1}{2\pi}\int_{\Gamma}\frac{\partial g}{\partial n}\, \log |\nabla g|\,ds +\frac{1}{2\pi}\int_{\partial B_{r}}\frac{\partial g}{\partial n}\, \log |\nabla g|\,ds \\
=-\frac{1}{2\pi}\int_{\Gamma}g\frac{\partial}{\partial n}\, \log
|\nabla g|\,ds +\frac{1}{2\pi}\int_{\partial B_{r}}g\,
\frac{\partial}{\partial n}\, \log |\nabla g|\,ds
-\sum^{L}_{k=1}g(\xi_{k}),
\end{multline*}
where we used that $\Delta \log  |\partial g  | = 2 \pi \sum_{k=1}^L  \delta_{\xi_k}.$
Equivalently
\begin{equation*}
\begin{split}
\frac{1}{2\pi}\int_{\Gamma}\frac{\partial g}{\partial
n}\, \log |\nabla g|\,ds&=\sum^{L}_{k=1}g(\xi_{k})+\sum^{N}_{j=1}c_{j}\, \frac{1}{2\pi}\int_{\Gamma_{j}}\frac{\partial}{\partial n}\, \log |\nabla g|\,ds\\*[5pt]
&\quad+\frac{1}{2\pi}\, \int_{\partial B_{r}}\left( \frac{\partial g}{\partial n}\, \log |\nabla g|-g \frac{\partial}{\partial n}\, \log |\nabla g|\right)\,ds
\end{split}
\end{equation*}
and we need to prove
\begin{equation}\label{eq13}
\lim_{r\to\infty}\frac{1}{2\pi} \int_{\partial B_{r}}\left(\frac{\partial g}{\partial n}\log |\nabla g|-g\frac{\partial}{\partial n}\log |\nabla g|\right)\,ds=\gamma_{K}.
\end{equation}
On $\partial B_{r}$ the normal derivative $\frac{\partial}{\partial n}$ is the partial derivative~$\frac{\partial}{\partial r}.$  By \eqref{greeninfinity}  
$$\frac{\partial g}{\partial r} (z)= \frac{1}{r} + \frac{\partial}{\partial r}h_{0}(z) = \frac{1}{r} + O\left(\frac{1}{r^2}\right), \quad |z|=r,$$
 and similarly
 $$\nabla g(z)= \nabla \log |z| + \nabla h_{0}(z) = \frac{1}{\bar z} + O\left(\frac{1}{|z|^2}\right), \quad |z|=r.$$
Thus
 $$ \log |\nabla g(z)| = \log \frac{1}{r}+O\left(\frac{1}{r}\right)$$ 
 and
 $$\frac{\partial}{\partial r}\log |\nabla g(z)| = -\frac{1}{r}+ O\left(\frac{1}{r^2}\right).$$
 
The integral in \eqref{eq13} becomes comparable to
\begin{equation*}
\begin{split}
\frac{1}{2\pi}\int_{\partial B_{r}}\left(\frac{1}{r}\log \frac{1}{r}+(\log r+h_{0}+\gamma_{K})\, \frac{1}{r}\right)\,ds + O\left(\frac{1}{r}\right)\!=\!
(h_{0}+\gamma_{K}) + O\left(\frac{1}{r}\right),
\end{split}
\end{equation*}
which tends to $\gamma_{K}$ as $r \rightarrow \infty,$
because $h_{0}(r)\to 0$.

\vspace*{7pt}

The next step is to prove the identities
$$
\frac{1}{2\pi} \int_{\Gamma_{j}}\frac{\partial}{\partial n}\log |\nabla g|\,ds=-1,\quad j=1,2,\dotsc,N.
$$
Since $\nabla g = 2 \bar\partial,$
\begin{equation*}
\begin{split}
\frac{1}{2\pi}\int_{\Gamma_{j}}\frac{\partial}{\partial n}\log |\nabla g|\,ds &= \frac{1}{2\pi}\int_{\Gamma_{j}}\frac{\partial}{\partial n}\, \log |\bar\partial g|\,ds\\*[7pt]
&=\frac{1}{2\pi}\int_{\Gamma_{j}}\langle 2\bar\partial \log |\bar\partial g|,n\rangle\,ds=\frac{1}{2\pi}\int_{\Gamma_{j}}\left\langle \frac{\bar\partial^{2}g}{\bar\partial g},n\right\rangle\,ds\\*[7pt]
&=\operatorname{Re}\left(\frac{1}{2\pi}\int_{\Gamma_{j}}\frac{\partial^{2}g}{\partial g}\, n\,ds\right)
=\operatorname{Re}\left(\frac{1}{2\pi i}\int_{\Gamma_{j}}\frac{\partial^{2}g}{\partial g}\,dz\right)\\*[7pt]
&=\frac{1}{2\pi}\operatorname{Var}\operatorname{arg}_{\Gamma_{j}}(\partial g)=
\frac{1}{2\pi}\operatorname{Var}\operatorname{arg}_{\Gamma_{j}}(\overline{\nabla g})=-1.
\end{split}
\end{equation*}
Therefore
$$
\frac{1}{2\pi}\int_{\Gamma}\frac{\partial g}{\partial
n}\, \log |\nabla g|\,ds=\sum^{L}_{k=1}g(\xi_{k})-\sum^{N}_{j=1}c_{j}+\gamma_{K}
$$
and the proof of the lemma is reduced to 
\begin{equation}\label{eq14}
\sum^{N}_{j=1}c_{j}\le \sum^{L}_{k=1}g(\xi_{k})+\gamma_{K}+\log 2.
\end{equation}
Let $\mu_{K}$  be  equilibrium measure of~$K.$  Then 
$$
g(z)=\gamma_{K}+\int_{K}\log |z-w|\,d\mu_{K}(w),
$$
and so, recalling that $\Gamma\subset \{|z|<1\}$, 
\begin{equation}\label{eq15}
g(z)\le \gamma_{K}+\log 2,  \quad  |z|<1.
\end{equation}

Now we make a remark.
Let $\gamma$ be a Jordan curve which is contained in a level set of~$g$ and that surrounds a number~$\beta$ of connected components of~$K.$  Then the number of critical points of~$g$ inside~$\gamma$ is $\beta-1.$ 

\begin{center}
\includegraphics{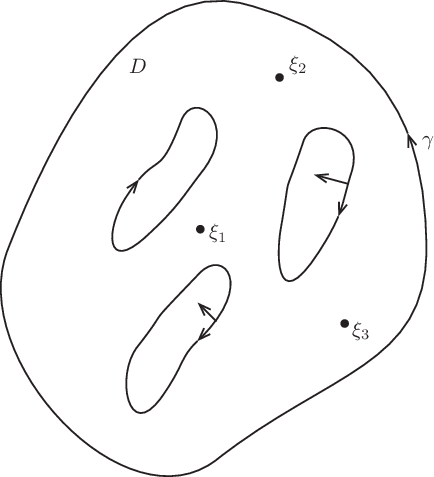}
\end{center}
\vspace*{17pt}

To see this,  let~$D$ stand for the domain bounded by $\gamma$ and~$K$; then $\nabla g$ is orthogonal to the boundary of~$D$ and when we travel along $\partial D$ the argument of~$\nabla g$ increases by~$2\pi$ over~$\gamma$ and decreases by~$2\pi$ over the boundary of each component of~$K$. So the total variation of $\operatorname{arg}(\partial g)$ on~$\partial D$ is $(\beta-1)\, 2\pi$ and, by the argument principle, $\partial g$ has $\beta-1$~zeros in~$D$.

\vspace*{7pt}

Take now $\gamma$ containing all critical points of $g$ and $K.$  Then the total number of critical points of $g$ is the number of components of $K$ minus $1$. Assuming that $\Gamma_{j}$ contains $\beta_{j}$ components of~$K$, $j=1,\dotsc,N$, we know that the number of critical points inside $\gamma_j$ is $\beta_j-1$ and so the number of critical points outside $\Gamma$ is $N-1.$  Replacing in~\eqref{eq14} the number $L$ of critical points outside $\Gamma$ by $N-1$, the inequality to be proven is
\begin{equation}\label{eq16}
\sum^{N}_{j=1}c_{j}\le \sum^{N-1}_{k=1}g(\xi_{k})+\gamma_{K}+\log 2.
\end{equation}

To show \eqref{eq16} let us assume that the constants~$c_{j}$ are different and ordered so that $c_{1}<c_{2}<\dotsb <c_{N}.$  We would like to understand how the $N-1$ critical points outside~$\Gamma$ appear.

\begin{center}
\includegraphics{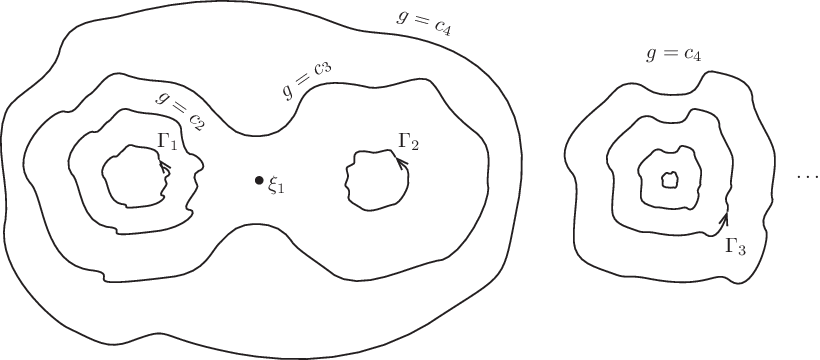}
\end{center}

\noindent

\begin{center}
\includegraphics{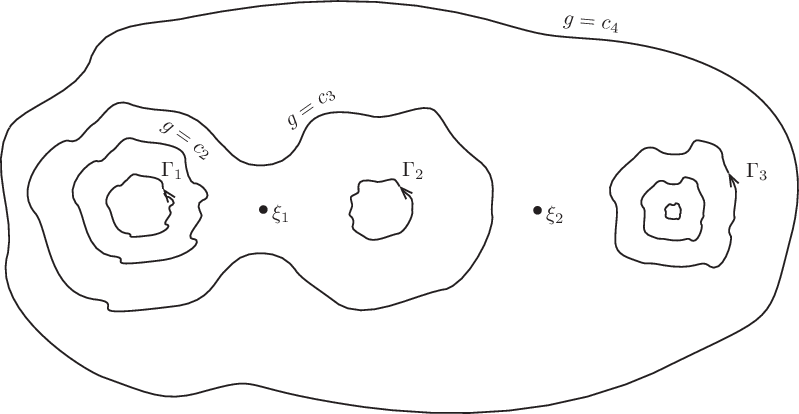}
\end{center}

The critical points of~$g$ appear when two components of a level set of~$g$ touch.  The critical point may have a multiplicity if more than two components coincide at a point; in this case, the multiplicity is, by the argument principle, the number of components that are joining minus one.  Assume, for instance, that two components of $\{z: g(z)=c \}$ intersect at $\xi_1$ and $c$ is the least number with this property. On one hand, $\nabla g(\xi_1) =0$, since otherwise $\{z: g(z)=c \}$ would be a smooth curve around~$\xi_1,$ which is not the case. On the other hand, the domain bounded by $\{z: g(z)=c \}$ contains two $\Gamma_j$,  which must be $\Gamma_1$ and $\Gamma_2$. Thus $g(\xi_1) \ge c_2.$ 
If there were three components of $\{z: g(z)=c \}$  which join at $\xi_1,$ then  $\Gamma_1$, $\Gamma_2$ and $\Gamma_3$ would be inside the domain bounded by $\{z: g(z)=c \}$. Hence $g(\xi_1) \ge c_3.$ 
Arguing inductively in this way we finally obtain that the $N-1$ critical points of $g$ outside $\Gamma$ satisfy
$$
\sum^{N-1}_{k=1}g(\xi_{k})\ge \sum^{N}_{j=2}c_{j}.
$$
Since $c_{1}=g(\tau)$ for some $\tau$,  \eqref{eq15}  gives $c_{1}\le \gamma_{K}+\log 2$  and \eqref{eq16} follows.\qed

\subsection{Proof of Lemma~\ref{lem4}}

Let $\mu$ be a Frostman measure for~$E$ with respect to the measure function~$h(r)=r^{s}$; that is, $\mu$ satisfies
$$
\mu (E)\approx M_{h}(E)\quad \text{and}\quad \mu (B(z,r))\le r^{s},\quad z \in \mathbb{C}, \quad r>0.
$$
The logarithmic potential of $\mu$ at a point~$z\in E$ is
\begin{equation*}
\begin{split}
U_{\mu}(z) & =\int_{E}\log \frac{1}{|z-w|}\,d\mu (w)=\int_{0}^{\infty}\mu \left\{w: \log \frac{1}{|z-w|}>t\right\}\,dt \\*[5pt]
& \le  \int_{0}^\tau \mu(E) \,dt + \int_{\tau}^\infty  e^{-st} \,dt = \tau \mu(E) + \frac{e^{-\tau s}}s.
\end{split}
\end{equation*}
The value of  $\tau$ which makes minimal the above expression is $\tau s = -\log \mu(E)$ and thus
\begin{equation*}
\begin{split}
U_{\mu}(z)& \le  \frac{\mu (E)}s \left(1+  \log \frac{1}{\mu(E)}\right), \quad z \in \mathbb{C}.
\end{split}
\end{equation*}
Recall now that the Robin constant of $E$ equals
$$
\gamma_{E} = \inf_\nu \int U_\nu(z)\,d\nu(z),$$
where the infimum is taken over all Borel probability measures supported on $E$.
So from the above calculation and the choice $\nu= \mu/\mu(E)$ we get the estimate
$$\gamma_E 
\le \frac{1}{s} \left(1+  \log \frac{1}{\mu(E)}\right).$$
Therefore
$$
\operatorname{Cap}(E)=e^{-\gamma_{E}}\ge C\,\mu (E)^{1/s}.\qedif
$$

\vspace*{7pt}

\begin{tabular}{@{}l}
Departament de Matem\`{a}tiques\\
Universitat Aut\`{o}noma de Barcelona\\
08193 Bellaterra, Barcelona, Catalonia\\
{\it E-mail:} {\tt jcufi@mat.uab.cat}\\
{\it E-mail:} {\tt xtolsa@mat.uab.cat}\\
{\it E-mail:} {\tt jvm@mat.uab.cat}
\end{tabular}

\end{document}